\documentclass{amsart} 
\usepackage{amsthm}
\usepackage{amssymb}
\usepackage{amsfonts}
\usepackage{graphicx}
\usepackage{ytableau}
\usepackage{todonotes}
\usepackage{quiver}
\usepackage{bbold}
\usepackage{longtable}
\usepackage{geometry}
\usepackage{hyperref}
\geometry{margin=1.0in}
\usepackage[utf8]{inputenc}

\newtheorem{theorem}{Theorem}
\newtheorem{prop}{Proposition}
\newtheorem{lem}{Lemma}
\newtheorem{defin}{Definition}

\newtheorem{sugg}{Suggestion}

\newtheorem{cor}{Corollary}

\newtheorem*{example}{Example}

\newcommand{\rect}{\operatorname{Rect}}
\newcommand{\evil}{\operatorname{Evil}}

\newcommand{\st}{\operatorname{St}}
\newcommand{\ps}{\operatorname{ParSeq}}

\newcommand{\supp}{\operatorname{supp}}
\newcommand{\lies}{\mathfrak{sl}}
\newcommand{\An}{\mathcal{A}^{(1)}_n}
\newcommand{\Ao}{\mathcal{A}^{(1)}_{n+1}}
\newcommand{\Ai}{\operatorname{AI}}
\newcommand{\LL}{\operatorname{LL}}
\newcommand{\LR}{\operatorname{LR}}
\newcommand{\RL}{\operatorname{RL}}
\newcommand{\RR}{\operatorname{RR}}

\usepackage{tikz}
\usetikzlibrary{automata, positioning, arrows}
\tikzset{
->,
node distance=3cm,
every state/.style={thick, fill=gray!10},
initial text=$ $, 
}

\binoppenalty=\maxdimen
\relpenalty=\maxdimen

\usepackage[
backend=biber,
style=alphabetic,
sorting=nyt
]{biblatex}

\addbibresource{duluth.bib}
\renewbibmacro{in:}{}
\DeclareFieldFormat{pages}{#1}
\DeclareFieldFormat[article,unpublished]{title}{#1} 

\title{A Bijection between Evil-avoiding and rectangular permutations}
\author{Katherine Tung}

\begin{document}

\maketitle

\begin{abstract} 
Evil-avoiding permutations, introduced by Kim and Williams in 2022, arise in the study of the inhomogeneous totally asymmetric simple exclusion process. Rectangular permutations, introduced by Chirivì, Fang, and Fourier in 2021, arise in the study of Schubert varieties and Demazure modules. Taking a suggestion of Kim and Williams, we supply an explicit bijection between evil-avoiding and rectangular permutations in $S_n$ that preserves the number of recoils. We encode these classes of permutations as regular languages and construct a length-preserving bijection between words in these regular languages. We extend the bijection to another Wilf-equivalent class of permutations, namely the $1$-almost-increasing permutations, and exhibit a bijection between rectangular permutations and walks of length $2n-2$ in a path of seven vertices starting and ending at the middle vertex. 
\end{abstract}
\section{Introduction}\label{sec:intro}

A permutation $\pi$ \emph{contains} a permutation $\sigma$ as a pattern if some subsequence of the values of $\pi$ has the same relative order as all of the values of $\sigma$. Otherwise, $\pi$ \emph{avoids} $\sigma$. Two classes of pattern-avoiding permutations, called \textit{evil-avoiding} and \textit{rectangular} permutations, have been of recent interest due to their algebraic significance. Kim and Williams gave the following definition of evil-avoiding permutations.

\begin{defin}
\textup{\cite{KW}} A permutation that avoids the patterns $2413, 4132, 4213$ and $3214$ is called \emph{evil-avoiding}.  \footnote{Kim and Williams called these permutations \textit{evil-avoiding} because if we replace I by 1, E by 2, L by 3, and V by
4, then EVIL and its anagrams VILE, VEIL and LEIV become the four patterns 2413, 4132, 4213 and 3214 \cite{KW}.
(Leiv is a name of Norwegian origin meaning “loaf.” Even though LIVE is an anagram of EVIL, an evil-avoiding permutation does not necessarily avoid the pattern 3142. )}
\end{defin}

For example, the permutation 1674523 is evil-avoiding, but the permutation 562314 is not evil-avoiding since, for instance, the subsequence 5214 reduces to the forbidden pattern 4213. Chirivì, Fang, and Fourier gave the following definition of rectangular permutations.

\begin{defin}
\textup{\cite{CFF}} A permutation that avoids the patterns $2413, 2431, 4213,$ and $4231$ is called \emph{rectangular}. 
\end{defin}

Both evil-avoiding permutations and rectangular permutations are enumerated by the same sequence, which begins $1,2,6,20,68,232,\ldots$ and appears in the OEIS as sequence A006012 \cite{OEIS}. The enumeration of evil-avoiding permutations is done by Kim and Williams in \cite{KW}, and the enumeration of rectangular permutations is done by Biers-Ariel and Chirivì, Fang, and Fourier in \cite{BA} and \cite{CFF}.

\begin{theorem} \label{thm:evil-count}
\textup{{{\cite[Proposition 1.14]{KW}}}} The number of evil-avoiding permutations in $S_n$ satisfies the recurrence $e(1) = 1, e(2) = 2, e(n) = 4e(n - 1) + 2e(n - 2)$.
\end{theorem}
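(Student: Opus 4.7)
My plan is to prove the recurrence by induction on $n$, partitioning the evil-avoiding permutations in $S_n$ according to the position of the largest entry $n$. This decomposition is natural because all four forbidden patterns $2413$, $4132$, $4213$, and $3214$ contain the maximum value, so the structural role of $n$ inside $\pi$ controls which of these patterns can arise.

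The base cases $e(1)=1$ and $e(2)=2$ are immediate. For the inductive step, write $\pi_k = n$. If $k=1$, the suffix $\pi_2 \cdots \pi_n$ must avoid $132$ and $213$ in order to block $4132$ and $4213$. If $k=n$, the prefix $\pi_1 \cdots \pi_{n-1}$ must avoid $321$ in order to block $3214$. If $1<k<n$, the pattern $2413$ couples the prefix $L=\pi_1\cdots\pi_{k-1}$ and the suffix $R=\pi_{k+1}\cdots\pi_n$: there can be no $a \in L$ together with a pair $b<a<c$ appearing in that order in $R$. In each regime I would exhibit an explicit bijection --- typically deletion of $n$, or deletion of $n$ together with one neighbor --- between the permutations in that regime and a subset of evil-avoiding permutations in $S_{n-1}$ or $S_{n-2}$, and characterize this subset precisely. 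To avoid a proliferation of subcases, I would introduce auxiliary counts (for example, the number of evil-avoiding permutations whose first entry is $1$, or whose last entry is $n$), derive a small linear system relating these to $e(n)$, and eliminate to obtain the target recurrence.

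The main obstacle is the middle regime $1<k<n$, where the $2413$-avoidance constraint entangles $L$ and $R$: an arbitrary entry in $L$ interacts with arbitrary ordered pairs in $R$. Showing that the resulting count collapses to a linear combination of $e(n-1)$ and $e(n-2)$ is the crux of the argument. An alternative route would be to build a generating tree for evil-avoiding permutations with a two-state succession rule, encode it as a $2\times 2$ transfer matrix, and read the recurrence off its characteristic polynomial; verifying the succession rule, however, essentially requires the same case analysis.
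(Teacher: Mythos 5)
There is a genuine gap, and it starts before the middle regime: the recurrence you set out to prove is false as printed, and your plan contains no step at which you would notice. Since all four forbidden patterns have length $4$, every permutation in $S_3$ is evil-avoiding, so $e(3)=6$, whereas $4e(2)+2e(1)=10$. The sequence $1,2,6,20,68,232,\dots$ actually satisfies $e(n)=4e(n-1)-2e(n-2)$ (characteristic roots $2\pm\sqrt{2}$); the plus sign in the statement is a typo carried over from the paper's citation of \cite{KW}. This matters structurally for your proposal: a partition of $\evil(n)$ into disjoint classes, each counted by a bijection onto a subset of $\evil(n-1)$ or $\evil(n-2)$, can only produce a recurrence with nonnegative coefficients, so your plan as stated cannot terminate at the correct recurrence. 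You would need either cancellation (inclusion--exclusion) or a refined recurrence in which the negative term arises from a domain restriction --- exactly as in the paper's later combinatorial proof of $|\!\evil(n,k)| = 3|\!\evil(n-1,k)|+|\!\evil(n-1,k-1)|-2|\!\evil(n-2,k)|$, where the $-2|\!\evil(n-2,k)|$ comes from excluding the images of $\psi_p$ and $\psi_q$ from the domain of $\psi_s$. A sanity check at $n=3$ would have caught this immediately.

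Beyond the sign, the proposal is a plan rather than a proof: you yourself identify the middle regime $1<k<n$, where $2413$-avoidance couples the prefix and suffix through all triples $b<a<c$, as the crux, and you leave it open; deferring the hard case to ``an explicit bijection to be exhibited'' is precisely the part that needs to be done. For comparison, the paper does not prove this theorem internally --- it quotes \cite[Proposition 1.14]{KW} --- but its own machinery yields a short independent proof that sidesteps your obstacle entirely: by Lemma \ref{lem:evil-regex}, evil-avoiding permutations of size $n$ are uniquely encoded by length-$n$ words of $((p|q)^*s^*r)^*s^+$, whence $\sum_n e(n)x^n = x(1-2x)/(1-4x+2x^2)$ and the (minus-sign) recurrence falls out of the denominator. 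Your ``alternative route'' via a two-state succession rule and a transfer matrix is in spirit this argument, but as you note, verifying the succession rule requires the same unresolved case analysis; the paper discharges that burden through the operators $\psi_p,\psi_q,\psi_r,\psi_s$ and the Kim--Williams partition-sequence bijection rather than through insertion of the maximum.
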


\begin{theorem} \label{thm:rect-count}
\textup{{{\cite[Theorem 8]{BA}}}, {{\cite[Corollary 8]{CFF}}}} The number of rectangular permutations in $S_n$ satisfies the same recurrence $r(1) = 1, r(2) = 2, r(n) = 4r(n - 1) + 2r(n - 2)$.
\end{theorem}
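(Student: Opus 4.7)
The plan is to derive the recurrence by a structural analysis of rectangular permutations, conditioning on the position of the maximum entry $n$. Because each of the four forbidden patterns $2413$, $2431$, $4213$, $4231$ has its largest entry in position $1$ or $2$, the value $n$ of $\pi \in S_n$ can only play the role of the ``$4$''. This gives a clean set of local constraints around $n$ and makes an insertion-style recursion plausible.

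First I would fix a rectangular $\pi \in S_n$ with $\pi(k) = n$ and write $\pi = A\,n\,B$, where $A = \pi(1)\cdots\pi(k-1)$ and $B = \pi(k+1)\cdots\pi(n)$. If $n$ plays the role of ``$4$'' in position $1$ of a forbidden pattern, then $B$ must avoid both length-three patterns $213$ and $231$; such suffixes have the well-known structure that each entry is either the maximum or the minimum of the values still to its right, so they are counted by $2^{|B|-1}$. If $n$ plays the role of ``$4$'' in position $2$, then for each entry $a$ of $A$ and each pair $b,c$ of entries of $B$ with $b$ before $c$, one must avoid both $b<a<c$ (from $2413$) and $c<a<b$ (from $2431$). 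Separately, deleting $n$ leaves a permutation $\pi' \in S_{n-1}$ which must itself be rectangular, so the problem reduces to understanding when $n$ can be reinserted into $\pi'$ to produce a rectangular permutation of $S_n$.

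The heart of the argument is then to show that the combined constraints above leave exactly four ``legal'' insertion sites for the new maximum of a given $\sigma \in S_{n-1}$ (morally: the extreme left and extreme right, and two positions anchored to distinguished entries of $\sigma$ such as its current maximum or its leftmost descent), yielding $4 r(n-1)$ counted with multiplicity. The overcount, coming from rectangular permutations of $S_n$ realized as legal insertions into two different predecessors in $S_{n-1}$, should biject with rectangular permutations of $S_{n-2}$ and account for the $2 r(n-2)$ correction. The base cases $r(1)=1$ and $r(2)=2$ are immediate, since every forbidden pattern has length four.

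The main obstacle is the third step: pinning down precisely which four insertion sites are legal for a given $\sigma$ and establishing the bijection between the doubly counted configurations and $S_{n-2}$. A cleaner alternative, in the spirit of the regular-language approach used elsewhere in this paper, would be to encode rectangular permutations as words over a finite alphabet, build a deterministic automaton accepting exactly those words, and read the recurrence off the characteristic polynomial of the transfer matrix. This bypasses the delicate insertion bookkeeping and is likely the strategy underlying the proofs cited from \cite{BA} and \cite{CFF}.
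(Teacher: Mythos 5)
Your plan has a structural flaw that cannot be repaired as stated. First, the claim that each $\sigma \in S_{n-1}$ admits ``exactly four legal insertion sites'' for the new maximum is false: take $\sigma$ to be the identity in $S_{n-1}$. Inserting $n$ after the prefix $1\,2\cdots j$ yields $1\cdots j\;n\;(j{+}1)\cdots(n{-}1)$, which is rectangular for every $j$ (the suffix after $n$ is increasing, and every entry before $n$ is smaller than every entry after it), so the identity has $n$ legal sites, not four. Second, and more fundamentally, your proposed mechanism for the correction term is impossible: deleting the maximum from $\pi \in S_n$ produces a \emph{unique} predecessor $\sigma \in S_{n-1}$ (which is automatically rectangular, since pattern avoidance is closed under deletion), so no rectangular $\pi$ can arise as a legal insertion into two different predecessors. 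The insertion relation is a tree, and there is no overcount to subtract; the correction term must instead come from the fact that the per-$\sigma$ site count varies, so that the total $\sum_\sigma (\text{legal sites for } \sigma)$ equals $4r(n-1)-2r(n-2)$ --- which is precisely the delicate bookkeeping you hoped to avoid. One further point: as printed, the statement reads $r(n)=4r(n-1)+2r(n-2)$, but with $r(1)=1$, $r(2)=2$ this gives $r(3)=10$, contradicting the sequence $1,2,6,20,68,232$ quoted in the introduction; the correct recurrence for A006012 is $r(n)=4r(n-1)-2r(n-2)$, so the plus sign is a typo, and your ``overcount'' framing was at least aiming at a subtracted correction.

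Your fallback suggestion (encode by a regular language and read the recurrence off an automaton) is essentially the route the paper makes available, though the paper itself only cites the theorem from \cite{BA} and \cite{CFF} rather than reproving it. Concretely, Lemma \ref{lem:rect-uniquely-expressible} shows every rectangular permutation has a $1$ in one of its first two positions or adjacent-in-value first two entries, giving unique factorization through $\psi_1,\psi_2,\psi_u,\psi_d$, where $\psi_1,\psi_d$ apply to all of $\rect(n-1)$ and $\psi_2,\psi_u$ apply exactly to $\rect(n-1)\setminus \psi_1(\rect(n-2))$. This yields immediately
\begin{equation*}
r(n) \;=\; 2\,r(n-1) \;+\; 2\bigl(r(n-1)-r(n-2)\bigr) \;=\; 4\,r(n-1)-2\,r(n-2),
\end{equation*}
equivalently the transfer-matrix count of the language $(1^*(2|u)^*d)^*1^+$ of Lemma \ref{lem:rect-regex}. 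Note the contrast with your decomposition: the paper anchors insertions at the \emph{left edge} with small or value-adjacent entries (positions $1$--$2$, values $1$, $\pi_1$, $\pi_1+1$), so that every permutation is the child of exactly one of four operators with cleanly restricted domains, whereas value-anchored insertion of the maximum leads to unboundedly varying site counts. Your opening observation --- that the ``$4$'' of each forbidden pattern occupies position $1$ or $2$ --- is the right kind of structural fact, but it is its left-edge counterpart that makes the recursion mechanical.
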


\begin{defin}
Two families $U$ and $V$ of pattern-avoiding permutations are called \emph{Wilf-equivalent} if for all $n, |S_n \cap U| = |S_n \cap V|.$ This is a \emph{trivial Wilf-equivalence} if $U$ avoids patterns in $P_U$ and $V$ avoids patterns in $P_V$, where for some symmetry (i.e., rotation or reflection) $\rho$ of the square, $P_U = \rho(P_V).$  
\end{defin}

Theorems \ref{thm:evil-count} and \ref{thm:rect-count} say that evil-avoiding and rectangular permutations are Wilf-equivalent. Since no symmetry takes the set of patterns $\{2413,4132,4213,3214\}$ to the set of patterns $\{2413,2431,4213,4231\},$ this Wilf-equivalence is nontrivial. Thus, the following question is natural.

\begin{sugg}\label{q:unfiltered-bij}
\textup{\cite{pc}} Find an explicit bijection between evil-avoiding and rectangular permutations in $S_n$.
\end{sugg}

In this paper, we exhibit such a bijection. In fact, our bijection preserves not only the size of a permutation but also the number of recoils, as motivated in \cite{KW} and explained below.

\begin{defin}
A \emph{recoil} of a permutation $\pi \in S_n$ is a value $i \in \{1,2,\ldots,n-1\}$ so that $i$ occurs after $i+1$ in $\pi$, which means $\pi^{-1}_i > \pi^{-1}_{i+1}.$ Equivalently, $i$ is a recoil of $\pi$ if $i$ is a descent of $\pi^{-1}$.
\end{defin}

Kim and Williams \cite{KW} enumerated evil-avoiding permutations of size $n$ with $k$ recoils. A quick computational check shows that for small $n,$ there are the same number of evil-avoiding permutations of length $n$ with $k$ recoils as rectangular permutations of length $n$ with $k$ recoils, thus motivating the following notion.

\begin{defin}
Two classes of pattern-avoiding permutations are \emph{strongly Wilf-equivalent} if for every $n$ and $k$ the classes have the same counts of permutations in $S_n$ with $k$ recoils.
\end{defin}

In \cite{KW}, Kim and Williams observed that there are several permutation classes enumerated by OEIS sequence A006012, including

\begin{enumerate}
    \item permutations $\pi \in S_n$ for which the pairs $(i, \pi_i)$ with $i < \pi_i$, considered as closed intervals $[i + 1, 
    \pi_i]$, do not overlap; equivalently, for each $i \in [n]$ there is at most one $j \le i$ with $\pi_j > i$,
    \item permutations on $\{1,2,\dots,n\}$ with no subsequence $abcd$ such that $bc$ are adjacent in position and $\max(a, c) < \min(b, d)$,
    \item rectangular permutations,
\end{enumerate}

hence the following suggestion, which may be regarded as a refinement of Suggestion \ref{q:unfiltered-bij}.

\begin{sugg}
\textup{ \cite{KW}}
Biject evil-avoiding permutations with $k$ recoils (where we let $k$ vary) with any of the above sets of permutations in $S_n$.

\end{sugg}

We construct such a refined bijection. 

\begin{theorem}
There is an explicit bijection between evil-avoiding and rectangular permutations in $S_n$ with $k$ recoils.
\end{theorem}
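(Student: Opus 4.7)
The plan is to encode both classes of permutations as regular languages over a common alphabet, in such a way that words of a fixed length correspond bijectively to permutations in $S_n$, and a local statistic on letters sums to the recoil count. Since the shared recurrence $a(n) = 4a(n-1) + 2a(n-2)$ has coefficients $4$ and $2$, I would aim for an automaton-style construction in which one grows a permutation step-by-step using either one of four ``single'' moves (contributing to the $4a(n-1)$ term) or one of two ``double'' moves that add two elements at once (contributing to the $2a(n-2)$ term). If these two automata can be matched by a bijection between their alphabets that is compatible with recoil-contributions letter for letter, the desired strongly Wilf-equivalent bijection follows immediately.

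To build the evil-avoiding encoding, I would fix a canonical one-pass construction---say, inserting values $1, 2, \ldots, n$ one at a time into the partial permutation, or reading the permutation left to right---and analyze which local moves preserve avoidance of $2413, 4132, 4213, 3214$. The hope is that only a bounded amount of information about the current partial object is needed to decide which moves are legal, so that the valid move sequences form a regular language. I would then compute how each legal move alters the recoil count of the permutation being built, recording this as a weight on the corresponding edge of the automaton. The rectangular case is treated analogously, using the forbidden patterns $2413, 2431, 4213, 4231$; the resulting automaton should be essentially isomorphic to the one alluded to by the walks on the seven-vertex path mentioned in the abstract.

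With both encodings in hand, the bijection is defined at the level of words, pairing letters of the two alphabets so that single moves correspond to single moves, double moves to double moves, and the per-letter recoil weight matches. The main obstacle, which I expect to absorb most of the work, is designing the two encodings so that the recoil statistic is truly a local function of the word, rather than depending on the global structure of the permutation. This likely requires the automaton states to carry some short-range bookkeeping, for instance the relative order of the most recently inserted entries or whether a new maximum has just appeared, so that edge weights are well-defined. Once the correct encoding is in place, verifying the bijection reduces to a finite check at the level of transitions, and the preservation of recoils follows from the matching of edge weights.
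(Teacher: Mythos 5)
Your proposal is a strategy outline in the same general spirit as the paper (encode both classes as regular languages of single-step insertion operators, with recoils counted by a distinguished letter), but as it stands it has two genuine gaps. First, your central assumption --- that the two automata ``can be matched by a bijection between their alphabets that is compatible with recoil-contributions letter for letter'' --- fails for the natural encodings. The paper's languages are $L_{\rect}$, generated by $(1^*(2|u)^*d)^*1^+$, and $L_{\evil}$, generated by $((p|q)^*s^*r)^*s^+$, and these are \emph{not} images of each other under any letter substitution: within each block delimited by $d$'s the letter $1$ precedes the letters $2,u$, whereas within each block delimited by $r$'s the letter $s$ \emph{follows} the letters $p,q$. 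The paper's bijection therefore needs a global step in addition to the substitution $2 \mapsto p$, $u \mapsto q$, $d \mapsto r$, $1 \mapsto s$, namely reversing the prefix before the last $r$. A purely letter-local pairing of transitions, which is what you propose to verify ``by a finite check,'' does not exist here, and nothing in your plan anticipates or supplies the reversal. Relatedly, your reading of the recurrence $a(n)=4a(n-1)+2a(n-2)$ as ``four single moves plus two double moves'' does not match either construction: both alphabets consist of four single-insertion operators, and the $2a(n-2)$ term emerges from the forbidden substrings ($21$, $u1$ in one language; $sp$, $sq$ in the other), not from two-letter moves. Kim and Williams' map $\Phi_{i,k,n}$ does add several entries at once, but the paper deliberately factors the corresponding permutation operator as $\psi_s^{n-i-1}\circ\psi_r$ precisely to reduce to single letters.

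Second, the step you defer as ``bookkeeping'' --- showing that evil-avoidance legality and the recoil increment are determined by bounded local data --- is where most of the actual difficulty lives, and your one-pass insertion scheme is unlikely to deliver it. In the paper, the evil-avoiding operator $\psi_q$ is not a local move at all: on $(a,b)$-sandwiched permutations it performs the insertion $\rho_{t+1,1}$ \emph{followed by} a global rearrangement $\gamma_{n-b+2,a+2}$ that moves an entry across the whole word, and proving that the four operators $\psi_p,\psi_q,\psi_r,\psi_s$ decompose every evil-avoiding permutation uniquely is done not by an automaton argument but by conjugating Kim--Williams' partition-sequence maps $\Psi_1,\Psi_2,\Phi_{i,k,n}$ through their bijection between $k$-Grassmannian permutations and $\ps(n,k)$. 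Your proposal, taken literally, would stall exactly at the point of defining the evil-avoiding moves: a naive left-to-right or value-by-value insertion does not obviously admit a finite-state description of legality for the pattern set $\{2413,4132,4213,3214\}$, and you give no candidate. So the plan identifies the right shape of proof (it even correctly guesses that recoils should be carried by one letter per class, which is Lemma \ref{lem:rect-recoil} and Proposition \ref{prop:evil-recoil} in the paper), but it is not yet a proof: the encodings are not constructed, the unique-decodability arguments are absent, and the letter-for-letter matching it relies on is false without the prefix reversal.
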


For example, the bijection sends the evil-avoiding permutation in Figure \ref{fig:evil} to the rectangular permutation in Figure \ref{fig:rect}.

\begin{figure}[ht] 
    \centering
    \includegraphics[scale = 0.7]{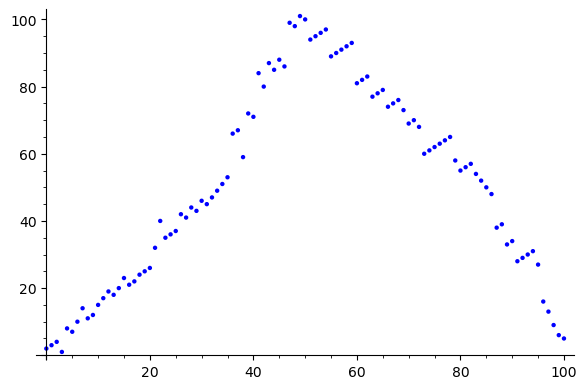}
    \caption{A plot of the evil-avoiding permutation \newline
    $[2, 3, 4, 1, 8, 7, 10, 14, 11, 12, 15, 17, 19, 18, 20, 23, 21, 22, 24, 25, 26, 32, 40, 35,
36, 37, 42, 41, 44, 43, 46, 45, 47, 49, 51, 53, 66, 67, 59, 72, 71, 84, 80, 87, 85, 88,
86, 99, 98, 101, 100, 94, 95, 96, 97, 89, 90, 91, 92, 93, 81, 82, 83, 77, 78, 79, 74,
75, 76, 73, 69, 70, 68, 60, 61, 62, 63, 64, 65, 58, 55, 56, 57, 54, 52, 50, 48, 38, 39,
33, 34, 28, 29, 30, 31, 27, 16, 13, 9, 6, 5]$.}
    \label{fig:evil}
\end{figure}
\pagebreak

\begin{figure}[ht] 
    \centering
    \includegraphics[scale = 0.7]{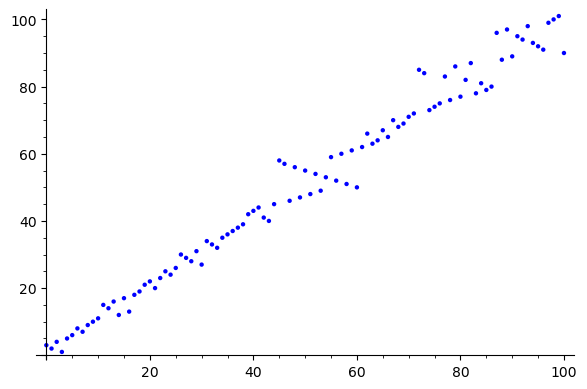}
    \caption{A plot of the rectangular permutation \newline
    $[3, 2, 4, 1, 5, 6, 8, 7, 9, 10, 11, 15, 14, 16, 12, 17, 13, 18, 19, 21, 22, 20, 23, 25, 24,
26, 30, 29, 28, 31, 27, 34, 33, 32, 35, 36, 37, 38, 39, 42, 43, 44, 41, 40, 45, 58, 57,
46, 56, 47, 55, 48, 54, 49, 53, 59, 52, 60, 51, 61, 50, 62, 66, 63, 64, 67, 65, 70, 68,
69, 71, 72, 85, 84, 73, 74, 75, 83, 76, 86, 77, 82, 87, 78, 81, 79, 80, 96, 88, 97, 89,
95, 94, 98, 93, 92, 91, 99, 100, 101, 90]$.}
    \label{fig:rect}
\end{figure}

A formal statement of the bijection appears in Section \ref{sec:the-bij}, Theorem \ref{thm:main}. We encode each evil-avoiding permutation and each rectangular permutation as elements of regular languages $L_{\evil}$ and $L_{\rect},$ respectively, then construct a bijection between these languages preserving the length and number of recoils of the corresponding permutations.

The significance of evil-avoiding permutations is rooted in Schubert calculus. There is a type of Markov chain called the \emph{asymmetric simple exclusion process} (ASEP) in which particles hop on a one-dimensional lattice subject to the condition that at most one particle may occupy a given site. The \textit{inhomogeneous totally asymmetric simple exclusion process} (inhomogeneous TASEP) is a type of ASEP where the sites $1, 2, \dots, n$ are arranged in a ring and the hopping rate depends on the weight of the particles. For more details on the inhomogeneous TASEP, see 
\cite{KW}, 
\cite{LW}, 
\cite{AL}, 
\cite{AM}, and 
\cite{Can}.
An interesting feature about ASEPs is that the steady-state probabilities at each site often contain Schubert polynomials, for reasons that are not entirely understood. In the case of the inhomogeneous TASEP, Kim and Williams demonstrate that a specialization of the steady-state probabilities at states corresponding to evil-avoiding permutations can be written as a ``trivial factor” times a product of (double) Schubert polynomials \cite{KW}. The number of Schubert polynomials in the steady-state formula is equal to the number of recoils in the corresponding evil-avoiding permutation \cite{KW}.

Rectangular permutations arise in the context of representation theory. The name ``rectangular" was coined by Chivrì, Fang, and Fourier in 2021 
and we briefly summarize its origin here; we direct the reader to their paper \cite{CFF} for more details.

Let $\Phi$ denote the root system of the Lie algebra $\lies_{n+1}$, let $\alpha_1, \dots, \alpha_n$ denote the simple roots of $\lies_{n+1}$, and let $$\Phi^+ = \{\alpha_{i,j} = \alpha_i + \alpha_{i+1} + \dots + \alpha_j \mid 1 \le i \le j \le n\}$$ denote the set of positive roots. The \textit{support} $\supp(\alpha_{i,j})$ of the root $\alpha_{i,j}$ is defined to be the subset $\{i, i+1, \dots, j\}$ of $\{1, 2, \dots, n\}$. The set $\Phi^+$ admits a poset structure with partial order relation given by $\alpha_{k,\ell} \le \alpha_{i,j}$ when $k \ge i$ and $ j \ge \ell.$ The meet $\alpha_{i,j} \wedge \alpha_{k,\ell}$ is defined by $\alpha_{\max(i,k),\min(j,\ell)}$ when it exists and dually, the join $\alpha_{i,j} \vee \alpha_{k,\ell}$ is defined by $\alpha_{\min(i,k),\max(j,\ell)}$. 

\begin{defin}\textup{\cite{CFF}}
A subset $A \subset \Phi^+$ is \textit{rectangular} if:
\begin{enumerate}
    \item it is triangular, i.e., for any $\alpha,\beta \in A$ such that $\supp(\alpha) \cup \supp(\beta)$ is a connected subset of $\{1, 2, \dots, n\}$, we have $\alpha \vee \beta \in A$; moreover, $\alpha \wedge \beta \in A$ if it exists,
    \item if $\alpha,\beta \in \Phi^+$ and $\alpha \wedge \beta,\alpha \vee \beta \in A$, then $\alpha,\beta \in A$ 
\end{enumerate} 

so that in the Hasse diagram for the subposet $(A,\le)$, we get the following ``rectangle":

\[\begin{tikzcd}
	& {\alpha \vee \beta} \\
	\alpha && \beta \\
	& {\alpha \wedge\beta}
	\arrow[from=2-1, to=1-2]
	\arrow[from=2-3, to=1-2]
	\arrow[from=3-2, to=2-1]
	\arrow[from=3-2, to=2-3]
\end{tikzcd}\]
\end{defin}

The aforementioned rectangular condition translates nicely into a condition on permutations. 
\begin{defin}\textup{\cite{CFF}}
A permutation $\sigma$ is rectangular if for any 
${1 \le i < k < j< \ell \le n}$, the following conditions are equivalent:
\begin{enumerate}
    \item $\sigma_i > \sigma_j$ and $\sigma_k > \sigma_{\ell}$
    \item $\sigma_i > \sigma_{\ell}$ and $\sigma_k > \sigma_j$.
\end{enumerate}
\end{defin}

As justified in \cite{CFF}, an equivalent definition is that rectangular permutations avoid the patterns in ${\{2413,2431,4213,4231\}}.$

The rest of the paper is organized as follows. Section \ref{sec:prelim} gives preliminary definitions and constructions, and Section \ref{sec:the-bij} introduces operators for constructing rectangular and evil-avoiding permutations and explicitly state the bijection between the languages $L_{\rect}$ and $L_{\evil}$. Sections \ref{sec:regex-rect} and \ref{sec:regex-evil} establish the bijection between $L_{\rect}$ and rectangular permutations and between $L_{\evil}$ and evil-avoiding permutations. Section \ref{sec:examples} illustrates the bijection for a number of small permutations. Section \ref{sec:bael} discusses 1-almost-increasing permutations and bijects them with rectangular permutations. Section \ref{sec:paths} provides a bijection between a family of paths and rectangular permutations. Finally, Section \ref{sec:future-dir} discusses possible future directions, algebraic and enumerative.

\section{Preliminaries}\label{sec:prelim}

Let $S_n$ be the symmetric group on the set $\{1, \dots, n\}$. We say that a permutation $\pi \in S_n$ has \textit{size} $n$ and write $\pi$ in tabular form as $[\pi_1~\pi_2~\cdots~\pi_n]$. For $n\ge 0$, we denote by $e_n$ the identity in $S_n.$

We may grade the rectangular permutations by the size of the permutation, or double grade these permutations by the size and the number of recoils. 
Let $\rect$ denote the set of all rectangular permutations, let $\rect(n)$ denote the set of rectangular permutations in $S_n$, and let $\rect(n,k)$ denote the set of rectangular permutations in $S_n$ with $k$ recoils. The evil-avoiding permutations can be singly or doubly graded in the same way, and we define $\evil$, $\evil(n)$, and $\evil(n,k)$ analogously. 

Since our bijection involves regular expressions (regexes), we will define these here the way Sipser does \cite{Sipser}. Informally, 

\begin{defin}
A regular expression (regex) is a formula that describes a language (set of words) over an alphabet $\Sigma$. A regex is either $\emptyset$ or it is built recursively from individual elements of $\Sigma \cup \{\varepsilon \}$ using the operations of 
\begin{enumerate}
\item concatenation $(\cdot)(\cdot)$, 
\item OR $(\cdot| \cdot)$ (an alternative between two simpler patterns), or 
\item the unary Kleene star $(\cdot)^*$ ($R^*$ allows 0 or more repetitions of the pattern $R$). 
\end{enumerate}
\end{defin}

The symbol $\varepsilon$ represents the empty string. For example, $((a|\varepsilon)b)^*(a|\varepsilon)$ generates words in $\{a,b\}^*$ with no adjacent $a$'s. For clarity, we can also use the unary Kleene plus $(\cdot)^+$ ($R^+$ allows 1 or more repetitions of the pattern $R$). For example, $(a|b)^+$ describes any positive-length string of $a$'s and $b$'s.

It is equivalent to be able to describe a language (a subset of $\Sigma^*$) as the set of words generated by a regular expression and to say that there is a deterministic finite automaton (DFA) accepting the words \cite{Sipser}. 

\begin{defin}
A deterministic finite automaton $M$ is a 5-tuple $(Q, \Sigma, \delta, q_0, F)$ where
\begin{enumerate}
    \item $Q$ is a finite set of states,
    \item $\Sigma$ is an alphabet consisting of a finite set of input symbols,
    \item $\delta$ is a transition function from $Q \times \Sigma$ to $Q$,
    \item $q_0 \in Q$ is an initial state,
    \item $F \subset Q$ is a set of accepting states.
\end{enumerate}

If we let $w = a_1a_2\cdots a_n$ be a string with characters in $\Sigma$, then $M$ accepts $w$ if there is a sequence of states $r_0, r_1, \dots, r_n \in Q$ such that
\begin{enumerate}
    \item $r_0 = q_0$,
    \item $r_{i+1} = \delta(r_i, a_{i+1})$ for $i = 0, 1, \dots, n - 1$,
    \item $r_n \in F.$
\end{enumerate}
\end{defin}

We will consider many lengthening operators (denoted $\rho_{i,j}$ or $\gamma_{a,b}$) on permutations. These operators have domains that are subsets of $\cup_{n=0}^\infty S_n$. Applying a lengthening operator to a permutation in $S_n$ in the operator's domain produces a permutation in $S_{n+1}$.

\begin{defin}
The insertion operator $\rho_{i,j}$ takes in a permutation $\pi$, inserts the value $i$ at index $j$, and increments by $1$ all values in $\pi$ greater than or equal to $i$. Equivalently, the permutation matrix for the result has the original permutation matrix as the $(i,j)$ minor formed by deleting the $i$th row and $j$th column, which are both all $0$s except for a $1$ in the $(i,j)$ position. If $\pi\in S_n$ then $\rho_{i,j}(\pi)$ is defined when $1 \le i \le n+1$ and $1 \le j \le n+1.$ In tabular form, all entries at least $i$ are increased by $1$, then an $i$ is inserted between positions $j-1$ and $j$. If $\sigma = \rho_{i,j}(\pi),$ then for $1 \le k \le n+1,$

$$\sigma_k = \begin{cases} i & k=j\\
\pi_k & k<j \text{ and }\pi_k < i\\
\pi_k + 1 & k<j \text{ and } \pi_k \ge i \\
\pi_{k-1} & k>j \text{ and } \pi_{k-1} < i \\
\pi_{k-1} + 1 & k>j \text{ and } \pi_{k-1} \ge i.
\end{cases} $$
\end{defin}

\begin{example}
$\rho_{3,4}[5674321] = [67835421]$ because we increment by $1$ all values in $[5674321]$ greater than or equal to $3$ to get $[6785421]$ and then we insert $3$ into position $4$ to get $[67835421].$
\end{example}

We also define an operator $\gamma_{a,b}$ which does not increase the length of a permutation. It will be used to define one of the operators on evil-avoiding permutations. 

\begin{defin}
The shifting operator $\gamma_{a,b}$ takes in a permutation $\pi \in S_n$, removes the value at index $a$, and inserts it at index $b$ for $a > b.$ More formally, if $\tau = \gamma_{a,b}(\pi)$, then for $1 \le k \le n$,
$$
\tau_k =
\begin{cases}
\pi_k & k < b \\
\pi_a & k = b \\
\pi_{k-1} & b < k \le a \\
\pi_k & k > a.
\end{cases}
$$
\end{defin}
\begin{example}
$\gamma_{5,2}[5674321] = [5367421].$
\end{example}

Additionally, we will make use of an indicator variable $\mathbb{1}_S$ to describe the actions of our operators on individual values in permutations.
\begin{defin}
For a statement $S$, the indicator variable $\mathbb{1}_S$ is defined as follows:
$\mathbb{1}_S = 
\begin{cases}
1 & \text{if $S$ is true} \\
0 & \text{otherwise.}
\end{cases}
$
\end{defin}

\section{The bijection}\label{sec:the-bij}

In this section, we construct two regular languages of operators on permutations based on rectangular and evil-avoiding permutations, $L_{\rect}$ and $L_{\evil}$. We find regular expressions for these languages and construct a length-preserving bijection between them. In Sections \ref{sec:regex-rect} and \ref{sec:regex-evil}, we will establish that these languages are encodings of rectangular and evil-avoiding permutations respectively.

\subsection{Rectangular permutations}

We define four operators on rectangular permutations: $\psi_1, \psi_2, \psi_u,$ and $\psi_d.$ Each increases the size of the permutation by one, and we restrict the domains of some of the operators. Similar operators were introduced by Biers-Ariel to count permutations avoiding the patterns $1324, 1423, 2314,$ and $ 2413$ \cite{BA}, which are trivially Wilf-equivalent to rectangular permutations. 

The map $\psi_1$ is the insertion operator $\rho_{1,1}$. It increments by $1$ all values of the permutation, then inserts a $1$ at the beginning. The domain of $\psi_1$ is all rectangular permutations of size at least $0$. Applying $\psi_1$ does not change the number of recoils in a permutation.

The map $\psi_2$ is the insertion operator $\rho_{1,2}$ with a restricted domain.  
We restrict $\psi_2$ to rectangular permutations with a first element greater than $1$, that is, rectangular permutations of size at least $2$ that could not be in the image of $\psi_1$. (We are restricting the domain of $\psi_2$ to ensure that $\psi_2$ and $\psi_d$, defined later, have disjoint images, which helps us avoid ambiguous encodings.) This restriction means that $\psi_2(\pi)$ has the same number of recoils as $\pi$. 

The map $\psi_u$ applies $\rho_{\pi_1,1}$ to permutation $\pi$.
To ensure that $\psi_u$ and $\psi_1$ have disjoint images, we restrict $\psi_u$ to rectangular permutations of size at least $2$ whose first element is not $1$. Applying $\psi_u$ does not change the number of recoils of a permutation.

The map $\psi_d$ applies $\rho_{\pi_1+1,1}$ to permutation $\pi$. 

This operator is defined on all rectangular permutations of size at least $1$. Applying $\psi_d$ always increases the number of recoils by one.

Here are some examples of the four operators. For more such examples, see Table \ref{table:ex} in Section \ref{sec:examples}. 
\begin{example}
The permutation $\sigma = [3214]$ does not start with $1$ and is in the domain of all four of these operators, and the images are $\psi_1(\sigma)= [14325], \psi_2(\sigma) = [41325], \psi_u(\sigma) = [34215],$ and $\psi_d(\sigma) = [43215].$
\end{example} 
\begin{example}
The permutation $\tau = [126354]$ starts with a $1$, so it is not in the domain of $\psi_2$ or $\psi_u$ as these operations would duplicate the values of $\psi_d(\tau) = [2137465]$ and $\psi_1(\tau)=[1237465],$ respectively.
\end{example}

\begin{center}
\begin{tabular}{||c | c | c | c ||} 
 \hline
 \multicolumn{4}{|c|}{Operators on Rectangular Permutations} \\
 \hline
 Name & Domain & Insertion Description & Image of $\pi \in S_n$ \\ 
 \hline
 $\psi_1$ & $\rect$ & $\rho_{1,1}$ & $[1, \pi_1+1, \pi_2+1, \dots, \pi_n+1]$ \\ 
 \hline
 $\psi_2$ & $\rect \setminus \psi_1(\rect)$ & $\rho_{1,2}$ & $[\pi_1+1, 1, \pi_2+1, \dots, \pi_n+1]$ \\
 \hline
 $\psi_u$ & $\rect \setminus \psi_1(\rect)$  & $\rho_{\pi_1,1}$ & $[\pi_1, \pi_1+1, \pi_2 + \mathbb{1}_{\pi_2 > \pi_1}, \dots, \pi_n + \mathbb{1}_{\pi_n>\pi_1}]$ \\
 \hline
 $\psi_d$ & $\rect \setminus \{e_0\}$  & $\rho_{\pi_1+1, 1}$ &  $[\pi_1+1, \pi_1, \pi_2 + \mathbb{1}_{\pi_2 > \pi_1}, \dots, \pi_n + \mathbb{1}_{\pi_n>\pi_1}]$\\ 
 \hline
\end{tabular}
\end{center}
\vspace{1.0ex}
A composition in these operators can be abbreviated as a word in alphabet $A_r = \{1,2,u,d\}$, e.g., $\psi_u \circ \psi_d \circ \psi_1$ corresponds to the word $ud1.$

We prove in Section \ref{sec:regex-rect} that each rectangular permutation can be expressed uniquely as a composition of maps in $A_r^* = \{\psi_1, \psi_2, \psi_u, \psi_d\}$ applied to $e_0$. The procedures of $\psi_2$ and $\psi_d$ could be applied more widely, but since the domains are restricted to ensure uniqueness, they cannot be applied to any permutation in the image of $\psi_1$. Since only $\psi_1$ can be applied to the identity in $S_0$, the word encoding a permutation of positive length must end in $1$. Words in $A_r^*$ satisfying these restrictions form a regular language, which is a language that can be described with a regular expression.

\begin{lem} \label{lem:rect-regex}
Let $L_{\rect}$ be the language in $A_r^*$ of compositions of positive length of  $\{\psi_1,\psi_2,\psi_u,\psi_d \}$ applied to the permutation in $S_0$. 
\begin{enumerate}
\item The words of $L_{\rect}$ are precisely the words in $A_r^*$ that end in $1$ with no $21$ or $u1$.
\item A regular expression for $L_{\rect}$ is $(1^* (2|u)^* d)^* 1^+$.  
\end{enumerate}
\end{lem}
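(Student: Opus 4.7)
My plan is to prove (1) by direct analysis of the four domain restrictions and then derive (2) as a short combinatorial decomposition at occurrences of the letter $d$. Throughout, I will read words right-to-left, in accordance with the convention that the composition $\psi_u \circ \psi_d \circ \psi_1$ corresponds to the word $ud1$: the rightmost letter of a word is the operator applied first to $e_0$.

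For the forward direction of (1), let $w = w_1 w_2 \cdots w_k \in L_{\rect}$. Since $e_0 \in S_0$ has size $0$ and among the four operators only $\psi_1$ has $e_0$ in its domain (the others require size $\ge 1$ or size $\ge 2$), we must have $w_k = 1$. More generally, whenever some letter $w_j$ equals $1$, the partial composition $\psi_{w_k} \circ \cdots \circ \psi_{w_j}$ applied to $e_0$ produces a permutation whose first entry is $1$; as $\psi_2$ and $\psi_u$ both require the first entry to exceed $1$, if $j > 1$ we must have $w_{j-1} \in \{1, d\}$. Thus no $21$ or $u1$ appears in $w$. For the backward direction, let $w$ end in $1$ with no $21$ or $u1$; I induct right-to-left, using as a black box the fact (verified in Section \ref{sec:regex-rect}) that each operator sends a rectangular permutation in its domain to another rectangular permutation. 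The base case $\psi_1(e_0) = [1]$ is rectangular. At each inductive step, if $w_{j-1} = 1$ then $\psi_1$ is always applicable; if $w_{j-1} = d$ then applicability follows from size $\ge 1$; and if $w_{j-1} \in \{2,u\}$ then by hypothesis $w_j \ne 1$, so the just-applied operator was $\psi_2$, $\psi_u$, or $\psi_d$, each of which produces a permutation of size $\ge 2$ with first entry $> 1$, exactly what $\psi_{w_{j-1}}$ requires.

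For (2), I partition $w$ at each occurrence of $d$. By (1), no maximal $d$-free substring of $w$ contains $21$ or $u1$, so in each such substring all $1$'s precede all $2$'s and $u$'s; hence each substring has the form $1^*(2|u)^*$. Therefore $w$ matches $(1^*(2|u)^*d)^* \, 1^*(2|u)^*$. Since $w$ ends in $1$, the trailing block contains no $2$ or $u$ and is nonempty, so it reduces to $1^+$, yielding $(1^*(2|u)^*d)^* 1^+$. The reverse containment is immediate: any word matching this regex ends in $1$, and any letter in $\{2,u\}$ is followed within its block only by further $\{2,u\}$ letters or by $d$, so $21$ and $u1$ never arise. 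The main difficulty is conceptual rather than technical: keeping the reading direction straight, since the condition ``no $21$ or $u1$'' reflects what happens to the current permutation \emph{after} a $\psi_1$ step, not what is being applied next, so one must align letter indices carefully when carrying out the induction.
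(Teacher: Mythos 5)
Your proof is correct and takes essentially the same approach as the paper: part (1) via the domain restrictions (only $\psi_1$ applies to $e_0$, and the domains of $\psi_2,\psi_u$ exclude exactly the images of $\psi_1$, whose outputs start with $1$), and part (2) by splitting the word at the $d$'s into blocks that must have the form $1^*(2|u)^*$, with the final block forced to be $1^+$. Your explicit right-to-left induction simply fills in the paper's terse claim that ``there are no other restrictions,'' correctly using closure of $\rect$ under the four operators as a black box; the only blemish is a harmless notational transposition, since under your stated convention the partial composition should read $\psi_{w_j}\circ\cdots\circ\psi_{w_k}$ rather than $\psi_{w_k}\circ\cdots\circ\psi_{w_j}$.
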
 
\begin{proof}
Only $\psi_1$ can be applied to the element of $S_0$, so the first (rightmost) operator must be $\psi_1$, and the word must end in $1$. The domains of $\psi_2$ and $\psi_u$ are restricted to exclude precisely the images of $\psi_1$, so the substrings $21$ and $u1$ are forbidden. There are no other restrictions.

One way to produce the regular expression is to consider splitting a word encoding a rectangular permutation by the $d$'s (if any) into strings in $\{1,2,u\}^*.$ Since there are no $21$ or $u1$ substrings, the $1$s must be to the left of the $2$ and $u$ symbols (if any), so the strings between the $d$'s must be of the form $1^*(2|u)^*$. Further, there must be a terminal $1$, so the last substring must be all $1$'s. Hence, a regular expression generating $L_{\rect}$ is $(1^*(2|u)^*d)^*1^+.$
\end{proof}

\subsection{Evil-avoiding permutations}

We can similarly encode evil-avoiding permutations as compositions of size-increasing operators applied to the permutation $e_0$ in $S_0$. These operators are $\psi_p, \psi_q, \psi_r,$ and $\psi_s$, each of which has domain equal to a proper subset of evil-avoiding permutations. Compositions of these operators can be abbreviated as words in $A_e = \{p,q,r,s\}.$

The operator $\psi_p$ works the same way as $\psi_1$ and $\rho_{1,1}$, but is restricted to evil-avoiding permutations with at least one recoil (i.e., to nonidentity permutations).

The operator $\psi_q$ is defined on evil-avoiding permutations with at least one recoil. Let $\pi$ be a permutation of size $n$ in the domain of $\psi_q$. Let the least value involved in a recoil be $t$; precisely, $\pi^{-1}_t > \pi^{-1}_{t+1}$ but for all $i<t, $ we have $\pi^{-1}_i < \pi^{-1}_{i+1}$. Then calculate $\rho_{t+1,1}(\pi)$. 

For most $\pi$ in the domain, we will set $\psi_q(\pi) := \rho_{t+1,1}(\pi)$, but if $\pi$ takes a specific form, we will need to make a slight modification. We say that $\pi$ is $(a,b)$-\emph{sandwiched} (or \emph{sandwiched} for short) if it begins with a possibly empty increasing run of consecutive values $1, 2, \dots, a$ and ends with an increasing run $a + 1, a + 2, \dots, a + b$ for some nonnegative integer $a$ and positive integer $b$. If $\pi$ is $(a,b)$-sandwiched, then we remove the value in $\rho_{t+1,1}(\pi)$ at index $n - b + 2$ and insert it at index $a + 2$ to create a new permutation $\gamma_{n - b + 2, a + 2} \circ \rho_{t+1,1}(\pi)$. If $\pi$ has the above form, we set $\psi_q(\pi) := \gamma_{n - b + 2, a + 2} \circ \rho_{t+1,1}(\pi).$ (A straightforward term-by-term evaluation shows this is equivalent to setting $\psi_q(\pi) := (a + b + 1, 1, 2,\dots, a + 1, \pi_{a+1}+1, \pi_{a+2} + 1,\dots, \pi_{n-b}+1, a + 2, a + 3\dots, a + b).$) Otherwise, we set $\psi_q(\pi) = \rho_{t+1,1}(\pi).$ 

The operator $\psi_r$ is defined on all evil-avoiding permutations of size at least $1$.  
On an evil-avoiding permutation in $S_n$, this is $\rho_{1,n+1}.$

The operator $\psi_s$ is defined on evil-avoiding permutations that have $e_t$ as a suffix for some positive integer $t$. We allow there to be no values before the 1, so that any identity permutation of size at least $0$ is included in the domain of $\psi_s$.
This operator is a restriction of $\rho_{t+1,n+1} = \rho_{\pi_n,n+1}.$

Here are some examples of the four operators. For more such examples, see Section \ref{sec:examples}, Table \ref{table:ex}. 
\begin{example}
The permutation $\tau = [45123]$ is in the domain of all four operators $\psi_p,\psi_q,\psi_r$, and $\psi_s$. The images are $\psi_p(\tau) = [156234],$ $\psi_q(\tau) = [415623],$ $\psi_r(\tau) = [562341],$ and $\psi_s(\tau) = [561234]$. 
\end{example}
\begin{example}
The permutation $\nu=[21453]$ does not end in $e_t$ for $t \ge 1$ so it is not in the domain of $\psi_s$. The images under the other operators are $\psi_p(\nu) = [132564], \psi_q(\nu) = [231564],$ and $\psi_r(\nu)=[325641].$
\end{example}

\begin{center}
\begin{tabular}{||c | c | c | c ||} 
 \hline
 \multicolumn{4}{|c|}{Operators on Evil-Avoiding Permutations} \\
 \hline
 Name & Domain & Insertion Description & Image of $\pi \in S_n$ \\ 
 \hline
 $\psi_p$ & $\evil\setminus\{e_n | n\ge 0\}$ & $\rho_{1,1}$ & $[1, \pi_1+1, \pi_2+1, \dots, \pi_n+1]$ \\ 
 \hline
 $\psi_q$ & $\evil\setminus \{e_n | n \ge 0\}$  &  \begin{tabular}{@{}c@{}} $\rho_{t+1,1}$ \\ if $\pi$ not sandwiched \\ $\gamma_{n-b+2,a+2} \circ \rho_{t+1,1}$ \\ otherwise \end{tabular} & \begin{tabular}{@{}c@{}}$[t+1,\pi_1+\mathbb{1}_{\pi_1 > t}, \dots,\pi_n + \mathbb{1}_{\pi_n>t}]$ or \\  $[t+1,\pi_1+\mathbb{1}_{\pi_1 > t}, \dots,\pi_{a+1} + \mathbb{1}_{\pi_{a+1}>t}, $ \\ $\pi_{n-b+2} + \mathbb{1}_{\pi_{n-b+2}>t}, $\\$ \pi_{a+2} + \mathbb{1}_{\pi_{a+2} > t}, \dots, \pi_{n-b+1} + \mathbb{1}_{\pi_{n-b+1}>t},  $\\$ \pi_{n-b+3} + \mathbb{1}_{\pi_{n-b+3}>t}, \dots, \pi_n + \mathbb{1}_{\pi_n>t}]$\end{tabular} \\
 \hline
 $\psi_r$ & $\evil \setminus \{e_0\}$  & $\rho_{1,n+1}$ & $[\pi_1+1, \pi_2 + 1, \dots, \pi_n+1,1]$ \\
 \hline

 $\psi_s$ &  \begin{tabular}{@{}c@{}} $\{e_n | n\ge 0\} \cup$ \\ $  \bigcup_k \psi_s^k \circ \psi_r(\evil \setminus \{e_0\})$ \end{tabular} & $\rho_{\pi_n+1,n+1}$ &  $[\pi_1+1, \pi_2+1, \dots, 1, 2, \dots, \pi_n, \pi_n+1]$\\ 
 \hline
\end{tabular}
\end{center}
\medskip

In Section \ref{sec:regex-evil}, we show that every evil-avoiding permutation can be written uniquely as a composition of these operators applied to the permutation of $S_0$. Because of domain restrictions, not all words in $A_e^* = \{\psi_p, \psi_q, \psi_r, \psi_s \}$ are valid. 

\begin{lem} \label{lem:evil-regex}
Let $L_{\evil}$ be the language in $A_e^*$ of compositions of $\{\psi_p, \psi_q, \psi_r, \psi_s \}$ of positive length applied to the permutation in $S_0$. 
\begin{enumerate}
\item The words of $L_{\evil}$ are precisely the words in $A_e^*$ ending in $s$ with no $sp$ or $sq$ and neither $p$ nor $q$ before the terminal string of $s$'s.
\item A regular expression for $L_{\evil}$ is $((p|q)^* s^* r)^* s^+$. 
\end{enumerate}

\end{lem}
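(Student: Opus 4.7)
The plan is to follow the structure of the proof of Lemma \ref{lem:rect-regex}. I would first establish part (1) by examining each of the three conditions in turn. Among the four operators, only $\psi_s$ admits $e_0$ in its domain, since $\psi_p$ and $\psi_q$ exclude all identities and $\psi_r$ excludes $e_0$; hence every word in $L_{\evil}$ must end in $s$. Next, to rule out $sp$ and $sq$, I would use the description of the domain of $\psi_s$: a non-identity permutation lies in this domain iff it has an $e_m$ suffix for some $m \ge 1$, i.e., its last $m$ positions contain values $1, 2, \dots, m$ in that order. Then $\psi_p(\pi) = [1, \pi_1+1, \dots, \pi_n+1]$ is non-identity (since $\pi \ne e_n$) yet has its $1$ at position $1$, so it cannot have an $e_m$ suffix for $m \ge 1$; this forbids $sp$. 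Finally, since $\psi_s^m$ applied to $e_0$ yields the identity $e_m$ and neither $\psi_p$ nor $\psi_q$ is defined on identities, the letter immediately to the left of the terminal block of $s$'s cannot be $p$ or $q$; it must be $r$.

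The main obstacle will be showing that no $sq$ substring can occur, i.e., $\psi_q(\pi)$ never lies in the domain of $\psi_s$. I would split into the non-sandwich and sandwich cases. In the non-sandwich case, $\psi_q(\pi)$ has its value $1$ at position $\pi^{-1}_1 + 1$; if $\psi_q(\pi)$ were to have an $e_m$ suffix for $m \ge 1$, matching values position by position using the formula in the definition of $\psi_q$ would force $\pi_{n-m+1}, \pi_{n-m+2}, \dots, \pi_n$ to be $1, 2, \dots, m$ respectively, making $\pi$ itself $(0,m)$-sandwiched and thereby contradicting the non-sandwich assumption. In the sandwich case, the explicit formula $(a+b+1, 1, 2, \dots, a+1, \pi_{a+1}+1, \dots, \pi_{n-b}+1, a+2, \dots, a+b)$ has tail values $a+2, \dots, a+b$ bounded below by $2$, and a direct case analysis on where the $e_m$ suffix would sit (entirely in the tail, straddling the tail and middle sections, or reaching the initial block $1, 2, \dots, a+1$) shows this is only possible when $\pi = e_n$, which is excluded from the domain of $\psi_q$.

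For sufficiency --- that every word satisfying the three constraints corresponds to a valid composition --- I would induct on word length: at each step, the current intermediate permutation lies in the domain of the next operator to be applied, since the constraints guarantee that $\psi_s$ is never applied immediately after $\psi_p$ or $\psi_q$ and that $\psi_p, \psi_q$ are never applied to identity. Part (2) then follows from part (1) by splitting a word of $L_{\evil}$ at each $r$: the segments between successive $r$'s use only letters in $\{p, q, s\}$, and by the $sp$ and $sq$ prohibitions they have all $p$'s and $q$'s preceding all $s$'s, giving the form $(p|q)^* s^*$. Including the $r$ terminating each such block gives the pattern $(p|q)^* s^* r$, while the final segment, constrained to contain no $p$ or $q$ before its terminal $s$'s, is $s^+$. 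Hence $L_{\evil}$ is generated by the regular expression $((p|q)^* s^* r)^* s^+$.
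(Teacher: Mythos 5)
Your proof is correct, and its skeleton matches the paper's: only $\psi_s$ admits $e_0$, forcing a terminal $s$; $\psi_p,\psi_q$ are undefined on the identities $e_m = \psi_s^m(e_0)$, so only $r$ (or nothing) can precede the terminal $s$-block; and part (2) is obtained, exactly as in the paper, by splitting at the $r$'s into blocks of the form $(p|q)^*s^*$ with a final $s^+$. Where you genuinely diverge is the prohibition of $sp$ and $sq$. The paper disposes of this in a single sentence by invoking the restricted domain of $\psi_s$, which in its tabular form is $\{e_n \mid n\ge 0\} \cup \bigcup_k \psi_s^k \circ \psi_r(\evil \setminus \{e_0\})$ --- by construction $\psi_s$ can only follow $r$ or $s$, with the needed disjointness from the images of $\psi_p,\psi_q$ really only cashed out later via Proposition \ref{prop:evil-disjoint}. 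You instead start from the textual characterization of the domain ($\pi$ has an $e_m$ suffix for some $m \ge 1$, or is an identity) and verify combinatorially that $\psi_p(\pi)$ and $\psi_q(\pi)$ never acquire such a suffix, including the honest two-case analysis for $\psi_q$: in the non-sandwiched case an $e_m$ suffix would force $\pi$ to end in $1,2,\dots,m$ and hence be $(0,m)$-sandwiched, and in the sandwiched case the $1$ of $\psi_q(\pi)$ sits at position $2$, forcing $m = n$, which the middle block (entries at least $a+b+2$) rules out unless $\pi = e_n$, excluded from the domain. This buys a self-contained proof of the lemma that does not lean on the later Kim--Williams disjointness machinery, and as a byproduct it establishes the equivalence of the paper's two descriptions of the domain of $\psi_s$; the cost is the case analysis. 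Two points to tighten when writing it up: in the non-sandwiched case, first observe that $m \le t$ (the value $t+1$ occurs only in position $1$ of $\rho_{t+1,1}(\pi)$, so it cannot lie in the suffix), which is what legitimizes reading off $\pi_{n-m+j} = j$ from $\pi_{n-m+j} + \mathbb{1}_{\pi_{n-m+j} > t} = j$; and in the sufficiency induction you should note that intermediate permutations remain evil-avoiding (so that they lie in the stated domains at all) --- a fact both you and the paper's ``there are no other restrictions'' leave to the Section \ref{sec:regex-evil} machinery.
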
 
\begin{proof}
The operator $\psi_s$ can be applied to the identity $e_0$ of $S_0$, while $\psi_p,\psi_q,$ and $\psi_r$ cannot, so a word in $L_{\evil}$ must end in $s$. The domain of $\psi_s$ is restricted so that it can only be applied after $\psi_r$ or $\psi_s,$ so the substrings $sp$ and $sq$ are forbidden. The operators $\psi_p$ and $\psi_q$ cannot be applied to the identity $e_n \leftrightarrow s^n$, so before the terminal string of $s$'s there can only be an $r$ (or nothing). There are no other restrictions.

To obtain the regular expression for such words, split on the $r$'s, if any, to produce a sequence of words in $\{p,q,s \}^*.$ The last word must be a string of $s$'s of positive length, hence it can be described by $s^+$. In each earlier word, the $s$'s must come after all of the $p$'s and $q$'s, so it can be described by $(p|q)^*s^*.$ Thus, a regular expression for $L_{\evil}$ is $((p|q)^*s^*r)^*s^+.$
\end{proof}

\subsection{Definition of the bijection}

\begin{theorem}\label{thm:main}
There is a length-preserving bijection $b: L_{\rect} \to L_{\evil}$ given by the below operations:
\begin{enumerate}
    \item substituting $2 \mapsto p$, $u \mapsto q$, $d \mapsto r$, and $1 \mapsto s$,
    \item reversing the prefix before the last $r$, if it exists. 
\end{enumerate}

This bijection has inverse $b^{-1}: L_{\evil} \to L_{\rect}$ given by:
\begin{enumerate}
    \item reversing the prefix before the last $r$, if it exists,
    \item substituting $p \mapsto 2$, $q \mapsto u$, $r \mapsto d$ and $s \mapsto 1$.
\end{enumerate}
\end{theorem}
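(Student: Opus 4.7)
The plan is to verify Theorem \ref{thm:main} at the purely syntactic level of words, using the explicit regular expressions from Lemmas \ref{lem:rect-regex} and \ref{lem:evil-regex}. Since the substitution acts letter-by-letter and reversal preserves length, length-preservation is immediate, so the content to establish is (a) $b(L_{\rect}) \subseteq L_{\evil}$, (b) $b^{-1}(L_{\evil}) \subseteq L_{\rect}$, and (c) that the two maps are mutual inverses.

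For (a), I would take an arbitrary $w \in L_{\rect}$ and use the regex $(1^*(2|u)^* d)^* 1^+$ to decompose $w = w_1\, d\, w_2\, d\, \cdots\, w_k\, d\, 1^m$ with $k \ge 0$, $m \ge 1$, and each $w_i \in 1^*(2|u)^*$. After the substitution $1 \mapsto s$, $2 \mapsto p$, $u \mapsto q$, $d \mapsto r$ the word becomes $w_1'\, r\, w_2'\, r\, \cdots\, w_k'\, r\, s^m$ with $w_i' \in s^*(p|q)^*$. When $k \ge 1$ the last $r$ exists, and reversing the prefix before it yields $\operatorname{rev}(w_k')\, r\, \operatorname{rev}(w_{k-1}')\, r\, \cdots\, r\, \operatorname{rev}(w_1')$, followed by the unchanged tail $r\, s^m$. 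Each $\operatorname{rev}(w_i')$ lies in $(p|q)^*s^*$, so the whole word matches $((p|q)^* s^* r)^k s^+$, which fits the regex $((p|q)^* s^* r)^* s^+$ for $L_{\evil}$. When $k = 0$ the word is $s^m$, also trivially in $L_{\evil}$.

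Claim (b) is proved by reversing the decomposition: for $w' \in L_{\evil}$ written as $u_1\, r\, u_2\, r\, \cdots\, u_k\, r\, s^m$ with $u_i \in (p|q)^* s^*$, the prefix reversal produces chunks in $s^*(p|q)^*$ separated by $r$'s, and the subsequent substitution lands in $(1^*(2|u)^* d)^* 1^+$. For (c), observe that letter-substitution is pointwise invertible, reversal is an involution, and the last $r$ in $b(w)$ (resp.\ the last $d$ in $b^{-1}(w')$) occupies the same position as the last $d$ in $w$ (resp.\ the last $r$ in $w'$) because the prefix reversal does not move the pivot letter itself; thus the two reversals in $b^{-1} \circ b$ and in $b \circ b^{-1}$ cancel, and so do the two substitutions.

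I do not anticipate a real obstacle: the theorem is essentially the observation that the two regexes differ only by swapping the order of the $1^*$ and $(2|u)^*$ (resp.\ $s^*$ and $(p|q)^*$) blocks inside each $d$/$r$-delimited chunk, and that this swap is precisely what reversing the prefix before the final $d$/$r$ accomplishes. The only care needed is to keep straight that the terminal run $1^+$ (resp.\ $s^+$) is untouched by the prefix reversal, and to handle the degenerate case $k=0$ (no $d$'s or $r$'s) separately.
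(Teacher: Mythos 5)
Your proposal is correct and is essentially the paper's own argument: the paper proves Theorem \ref{thm:main} by asserting that it ``follows immediately'' from the two regular expressions, and your write-up simply supplies the details of that very observation (splitting on the $d$'s/$r$'s, noting that reversal swaps the $1^*$/$(2|u)^*$ blocks into $(p|q)^*/s^*$ order while fixing the pivot and the terminal run). Your added care with the pivot position and the $k=0$ case is sound and matches what the paper leaves implicit.
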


The fact that $b$ is a bijection and $b^{-1}$ is its inverse follows immediately from the regular expressions $(1^*(u|2)^*d)^*1^+$ and $((p|q)^*s^*r)^*s^+$ for $L_{\rect}$ and $L_{\evil}$. 

\begin{example}
The rectangular permutation $$[4,1,2,5,6,3,9,8,10,7,11,13,12,15,14,17,18,19,20,16]$$ may be encoded by the word $$22uud1dud11d1d1uuud1$$ in $L_{\rect}$. To apply the map $b$ to this word, we substitute $2 \mapsto p$, $u \mapsto q$, $d \mapsto r$, and $1 \mapsto s$ to get $$ppqqrsrsrqrssrsrsqqqrs$$ and then reverse the prefix before the last $r$ to get $$qqqsrsrssrqrsrqqpprs$$ in $L_{\evil}$. After decoding this word, we get the evil-avoiding permutation $$[3, 4, 5, 1, 12, 11, 18, 19, 15, 16, 17, 20, 13, 14, 8, 9, 10, 6, 7, 2].$$
\end{example}

\section{A regular expression for rectangular permutations}\label{sec:regex-rect} 

We next establish the encoding of rectangular permutations of positive length as words in $L_{\rect}$, which is generated by the regular expression $(1^*(2|u)^*d)^*1^+.$ Throughout the proofs in this section, we say that two values $\pi_i,\pi_j$ in a permutation $\pi$ are \textit{adjacent} if $|\pi_i - \pi_j| = 1$. In particular, we do not require $|i - j| = 1.$

\begin{lem} 
If $\pi$ is rectangular, then any of the permutations $\psi_1(\pi), \psi_2(\pi), \psi_u(\pi),$ and $\psi_d(\pi)$ that are defined are also rectangular.
\end{lem}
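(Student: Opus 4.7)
The plan is to handle each of the four operators in turn, showing that none of them can introduce any of the forbidden patterns $2413, 2431, 4213, 4231$. In each case the argument is by contradiction: assume the image contains one of the forbidden patterns, and then show this forces $\pi$ itself to contain a forbidden pattern.

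For $\psi_1(\pi)$, the inserted entry is the value $1$, which is the global minimum. In every forbidden pattern $2413, 2431, 4213, 4231$, the entry $1$ occupies position $3$ or $4$ (counting from the left), never position $1$. Hence the leading $1$ in $\psi_1(\pi)$ cannot participate in any occurrence of a forbidden pattern, and any such occurrence would already lie in the suffix, which is order-isomorphic to $\pi$. A nearly identical argument handles $\psi_2$: the $1$ inserted in position $2$ would have to play the role of one of the first two entries of a forbidden pattern, but no forbidden pattern has its minimum in its first two entries, and positions $\{1, 3, 4, \dots, n+1\}$ of $\psi_2(\pi)$ are order-isomorphic to $\pi$.

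The operators $\psi_u$ and $\psi_d$ require a bit more care. I plan a case analysis on which of positions $1$ and $2$ of the image are used by a hypothetical forbidden occurrence $i_1 < i_2 < i_3 < i_4$. First I will record the elementary fact that, because entries are shifted by the threshold $\pi_1$ (resp.\ $\pi_1 + 1$), the relative order among any set of entries drawn only from positions $\ge 3$ matches the relative order of the corresponding entries of $\pi$ at positions $\ge 2$; moreover, comparing such an entry to the value at position $1$ of $\psi_u(\pi)$ (which is $\pi_1$) or at position $2$ of $\psi_d(\pi)$ (also $\pi_1$) gives the same comparison as comparing $\pi_1$ to the corresponding entry of $\pi$. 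Using this, if at most one of positions $1, 2$ appears in the forbidden occurrence, the occurrence can be pulled back to a forbidden pattern in $\pi$.

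The only remaining case is that positions $1$ and $2$ are both used, so $i_1 = 1, i_2 = 2$. For $\psi_u$ the pair of values is $(\pi_1, \pi_1 + 1)$ (increasing and consecutive), and for $\psi_d$ it is $(\pi_1 + 1, \pi_1)$ (decreasing and consecutive). In the patterns $4213$ and $4231$ the first two entries are in decreasing order, ruling out $\psi_u$; in the patterns $2413$ and $2431$ they are in increasing order, ruling out $\psi_d$. In the two remaining sub-cases the first two entries of the pattern are $(2,4)$ or $(4,2)$, and the pattern also contains a value ($3$) strictly between them; but no entry of the image can lie strictly between the consecutive integers $\pi_1$ and $\pi_1 + 1$, giving a contradiction. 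I expect the main obstacle to be verifying this final case carefully for each pattern and each of $\psi_u, \psi_d$, together with keeping track of the relative-order bookkeeping introduced by the shifting rule, but both are purely mechanical once set up.
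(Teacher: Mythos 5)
Your proposal is correct and follows essentially the same route as the paper's proof: for $\psi_1$ and $\psi_2$ you use that no forbidden pattern has its minimum among its first two entries, and for $\psi_u$ and $\psi_d$ you use that the first two entries of the image are consecutive integers while every forbidden pattern has a value strictly between its first two entries. Your explicit final case (no entry can lie strictly between $\pi_1$ and $\pi_1+1$) is just a spelled-out version of the paper's observation that the first two pattern values are never adjacent, and your order-isomorphic pullback is equivalent to the paper's trick of replacing the first value by the second.
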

\begin{proof}
These operators can be viewed as inserting an element into the first position for $\psi_1, \psi_u, \psi_d$ or second position for $\psi_2.$ We only need to demonstrate that there are no patterns 2413, 2431, 4213, and 4231 using the added element since any patterns without the added element would have been present in $\pi$. 

The added element of $\psi_1(\pi)$ is a leading 1. Any pattern involving this entry has a leading 1, but none of the forbidden patterns start with a 1, so if $\pi$ is rectangular, so is $\psi_1(\pi)$. 

The added elements of $\psi_u$ and $\psi_d$ are in the first position and are adjacent to the value in the second position. In all of the forbidden patterns, the first two values in the pattern are not adjacent. Thus, any forbidden pattern in $\psi_u(\pi)$ or $\psi_d(\pi)$ using the first value cannot also use the second value. Then, we can replace the first value with the second value to produce the same forbidden pattern, which means $\pi$ also would have to have that forbidden pattern. Thus, if $\pi$ is rectangular, so are $\psi_u(\pi)$ and $\psi_d(\pi)$ (if defined).

The added element of $\psi_2(\pi)$ is a 1 in the second position. If the 1 is used in a pattern, the pattern must have a 1 in the first or second position, but none of 2413, 2431, 4213, or 4231 have a 1 in the first or second position. Thus, if $\pi$ is rectangular, so is $\psi_2(\pi)$ (if defined).
\end{proof}

\begin{lem}  \label{lem:rect-uniquely-expressible}
For $n \ge 1$, every rectangular permutation in $\rect(n)$ is uniquely expressible as a map in the set $\{\psi_1, \psi_2,  \psi_u, \psi_d\}$ applied to an element of $\rect(n-1)$.  
\end{lem}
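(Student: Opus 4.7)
My plan is to prove the lemma by showing that the four restricted images $\psi_1(\rect(n-1))$, $\psi_2(\rect(n-1))$, $\psi_u(\rect(n-1))$, $\psi_d(\rect(n-1))$ partition $\rect(n)$ for $n \ge 1$. I will first inspect the first two entries $(\sigma_1,\sigma_2)$ of a permutation $\sigma$ in each image: $\psi_1(\pi)$ gives $\sigma_1 = 1$; with the domain restriction $\pi_1 > 1$, $\psi_2(\pi)$ yields $\sigma_1 \ge 3$ and $\sigma_2 = 1$; $\psi_u(\pi)$ (again using $\pi_1 > 1$) yields $\sigma_1 \ge 2$ and $\sigma_2 = \sigma_1 + 1$; and $\psi_d(\pi)$ yields $\sigma_1 \ge 2$ and $\sigma_2 = \sigma_1 - 1$. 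These four ``shapes'' of $(\sigma_1,\sigma_2)$ are pairwise disjoint, which simultaneously establishes disjointness of the images and, since each operator is invertible on its image by reading off $\pi_1$ and decrementing appropriate entries, uniqueness of the preimage once $\sigma$ is assigned a shape.

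The core step is to show that every $\sigma \in \rect(n)$ falls into one of these shapes. Equivalently, I prove by contrapositive that if $\sigma_1 \ne 1$, $\sigma_2 \ne 1$, and $|\sigma_1 - \sigma_2| \ge 2$, then $\sigma$ contains a forbidden pattern. Since $\sigma_1, \sigma_2 \ne 1$, the value $1$ appears at some position $\ell \ge 3$; and since $|\sigma_1 - \sigma_2| \ge 2$, there is a value $v$ strictly between $\sigma_1$ and $\sigma_2$, appearing at some position $k \ge 3$. The four-term subsequence consisting of $\sigma_1, \sigma_2$ followed by whichever of $v, 1$ comes first and then the other reduces to a forbidden pattern: in the case $\sigma_1 < \sigma_2$ it is $2431$ when $k < \ell$ and $2413$ when $\ell < k$; in the case $\sigma_1 > \sigma_2$ it is $4231$ when $k < \ell$ and $4213$ when $\ell < k$. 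Any of these contradicts rectangularity of $\sigma$.

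Finally, for each of the four shapes the unique candidate preimage $\pi \in S_{n-1}$ is constructed by inverting the corresponding operator, and any occurrence of a forbidden pattern in $\pi$ would lift verbatim to one in $\sigma$ (since the inserted entry is not used), so rectangularity of $\sigma$ forces $\pi \in \rect(n-1)$. The main obstacle is the four-subcase pattern-containment argument in the middle paragraph; after that is settled, disjointness of images, existence of the preimage, and its rectangularity are all routine, and together they give the desired existence and uniqueness.
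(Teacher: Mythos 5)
Your proposal is correct and takes essentially the same approach as the paper: the same trichotomy on the first two entries (a leading $1$, a $1$ in the second position, or $|\sigma_1-\sigma_2|=1$), with coverage established by exactly the paper's four-point argument using the position of the value $1$ and of a value $v$ strictly between $\sigma_1$ and $\sigma_2$ to force one of the patterns $2413$, $2431$, $4213$, $4231$. Your explicit identification of the pattern in each subcase, the disjointness of the four ``shapes,'' and the remark that preimages inherit rectangularity simply make explicit details the paper leaves implicit.
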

\begin{proof}
We claim that every rectangular permutation $\pi$ must have adjacent first two elements or a 1 in one of the first two positions. To see why, assume otherwise. Then the first two elements $\pi_1, \pi_2$ are not adjacent. Let $v$ be a value at least $\min(\pi_1,\pi_2)$ and at most $\max(\pi_1, \pi_2)$. Then the positions $\{1,2,\pi^{-1}_1, \pi^{-1}_v\}$ form a 2413, 2431, 4213, or 4231, which is a contradiction. Thus, the claim is shown.

If $\pi_1 = 1$, then $\pi$ is in the image of $\psi_1$. If $\pi_2=1$, then $\pi$ is in the image of $\psi_2$ if $\pi_1 > 2$ or in the image of $\psi_d$ if $\pi_1=2$. Otherwise, $|\pi_1-\pi_2|=1$. If $\pi_2 = \pi_1+1$ with $\pi_1>1$ then $\pi$ is in the image of $\psi_u$. If $\pi_2 = \pi_1-1$ then $\pi$ is in the image of $\psi_d.$ The restrictions of the domains for the four operators mean there is no ambiguity in deciding which operator produced $\pi$.  
\end{proof}

The following corollary follows immediately by induction.

\begin{cor}
Every rectangular permutation can be expressed uniquely as a composition of $\psi_1, \psi_2, \psi_u$, and $\psi_d$.
\end{cor}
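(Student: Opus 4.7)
The plan is a routine induction on the size $n$ of the permutation, using Lemma \ref{lem:rect-uniquely-expressible} as the engine for the inductive step. The induction variable is $n$, the statement to prove being: for each $n \ge 0$, every $\pi \in \rect(n)$ admits a unique expression as a composition of operators from $\{\psi_1,\psi_2,\psi_u,\psi_d\}$ applied to $e_0$.

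For the base case $n=0$, the only rectangular permutation in $\rect(0)$ is $e_0$ itself, and it is represented by the empty composition. Uniqueness here holds because every one of the four operators strictly increases the size, so no nonempty composition applied to $e_0$ can yield $e_0$. For the inductive step, suppose $n \ge 1$ and that the statement holds for all rectangular permutations of size less than $n$. Given $\pi \in \rect(n)$, Lemma \ref{lem:rect-uniquely-expressible} provides a unique $x \in \{1,2,u,d\}$ and a unique $\pi' \in \rect(n-1)$ with $\pi = \psi_x(\pi')$. By the inductive hypothesis, $\pi'$ has a unique expression $\psi_{x_{n-1}} \circ \cdots \circ \psi_{x_1}$ applied to $e_0$, and prepending $\psi_x$ yields an expression for $\pi$. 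Uniqueness of the full expression follows by combining the uniqueness of $x$ and $\pi'$ from the lemma with the inductive uniqueness for $\pi'$.

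I do not anticipate any obstacle beyond what is already contained in Lemma \ref{lem:rect-uniquely-expressible}: the lemma packages both existence (every rectangular $\pi$ lies in the image of exactly one operator) and uniqueness (the preimage in $\rect(n-1)$ is determined). Once these are in hand the induction is purely formal. The only minor point worth stating explicitly is that no valid composition representing $e_0$ can be nonempty, which justifies that the base case representation is unique and hence anchors the recursion cleanly.
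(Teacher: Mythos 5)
Your proof is correct and follows exactly the route the paper intends: the paper states that the corollary ``follows immediately by induction'' from Lemma~\ref{lem:rect-uniquely-expressible}, and your write-up simply makes that induction explicit, including the correct observation that the base case at $e_0$ is anchored by the operators being strictly size-increasing. Nothing further is needed.
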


Further, we can determine the number of recoils in a permutation from the corresponding composition. 

\begin{lem} \label{lem:rect-recoil}
The operators $\psi_1,\psi_2,$ and $\psi_u$ do not change the number of recoils of a permutation. The operator $\psi_d$ increases the number of recoils by $1$. 
\end{lem}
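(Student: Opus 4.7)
The plan is to prove both claims by direct case analysis, computing, for each of the four operators and each value $i$, how the positions $\sigma^{-1}_i$ and $\sigma^{-1}_{i+1}$ in $\sigma := \psi(\pi)$ relate to positions in $\pi$. Since each operator is (on its domain) an instance of $\rho_{a,b}$, which first increments values at least $a$ by one and then inserts $a$ at index $b$, the formulas in the operator table let me read off $\sigma^{-1}_v$ explicitly: for $v < a$, $\sigma^{-1}_v$ is the position of $v$ in $\pi$ shifted rightward by one if that position is at least $b$; for $v = a$, $\sigma^{-1}_v = b$; and for $v > a$, $\sigma^{-1}_v$ equals the (shifted) position of $v - 1$ in $\pi$. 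The recoil condition at $i$ then splits into a small number of cases determined by where $i$ and $i+1$ sit relative to the critical value $a$.

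For $\psi_1 = \rho_{1,1}$ and $\psi_2 = \rho_{1,2}$, the inserted value is $1$, placed at position $1$ or $2$, while all original values shift up by one. I will check that value $1$ in $\sigma$ always precedes value $2$: for $\psi_1$ because $\sigma^{-1}_1 = 1 < \pi^{-1}_1 + 1 = \sigma^{-1}_2$, and for $\psi_2$ because the domain restriction $\pi_1 > 1$ forces $\pi^{-1}_1 \ge 2$, so $\sigma^{-1}_2 \ge 3 > 2 = \sigma^{-1}_1$. Hence $i = 1$ is never a recoil of $\sigma$. For $i \ge 2$, the formulas collapse to $\sigma^{-1}_i = \pi^{-1}_{i-1} + 1$ and $\sigma^{-1}_{i+1} = \pi^{-1}_i + 1$, so $i$ is a recoil of $\sigma$ iff $i-1$ is a recoil of $\pi$. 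This supplies a bijection $i \leftrightarrow i-1$ between recoils, proving the claim for $\psi_1$ and $\psi_2$.

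For $\psi_u$ and $\psi_d$, the critical value is $\pi_1$ or $\pi_1 + 1$, placed at position $1$, so values $\pi_1$ and $\pi_1 + 1$ occupy positions $1$ and $2$ of $\sigma$ in opposite orders. The analysis splits into five cases: $i < \pi_1 - 1$, $i = \pi_1 - 1$, $i = \pi_1$, $i = \pi_1 + 1$, and $i > \pi_1 + 1$. For $\psi_u$, I will verify: on $i < \pi_1 - 1$ and $i > \pi_1 + 1$, recoil status matches between $\sigma$ and $\pi$ up to an index shift; $i = \pi_1 - 1$ is automatically a recoil of both, using the domain restriction $\pi_1 \ge 2$ together with $\pi^{-1}_{\pi_1 - 1} \ge 2 > 1 = \pi^{-1}_{\pi_1}$; $i = \pi_1$ is a non-recoil of both since $\pi_1$ sits at position $1$ in $\pi$ and in $\sigma$; and $i = \pi_1 + 1$ is a non-recoil of $\sigma$ with no counterpart in $\pi$, exactly absorbing the discrepancy introduced by the index shift. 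For $\psi_d$, the same case split yields precisely one extra recoil: $i = \pi_1$ becomes a recoil of $\sigma$ since the positions of $\pi_1 + 1$ and $\pi_1$ are $1$ and $2$, while $\pi_1$ is never a recoil of $\pi$ (position $1$ being minimal). All other cases match, giving a net gain of $+1$.

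The main obstacle is the bookkeeping around edge cases---when $\pi_1 = n$ for $\psi_d$ so that value $\pi_1 + 1$ does not exist in $\pi$, or when $n$ is small enough that the window values $\pi_1 \pm 1$ fall outside $\{1, \ldots, n\}$. These are straightforward once the index-shift formulas are written down, but they require attention. I expect the $\psi_u$ case to be the trickiest, since its window spans three consecutive $i$-values and the argument relies on the automatic recoil at $i = \pi_1 - 1$ and the automatic non-recoil at $i = \pi_1 + 1$ in $\sigma$ exactly cancelling to conserve the recoil count.
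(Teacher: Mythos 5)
Your proposal is correct and takes essentially the same route as the paper, whose proof is exactly this argument in sketch form: track each recoil of $\pi$ through the insertion, check that $\psi_1,\psi_2,\psi_u$ induce a bijection on recoils (with the index shift around the inserted value), and observe that $\psi_d$ creates the single new recoil at $i=\pi_1$, coming from the adjacent pair $\pi_1+1,\pi_1$ in positions $1,2$. One small inaccuracy worth fixing: for $\psi_2=\rho_{1,2}$ the collapsed formula $\sigma^{-1}_i=\pi^{-1}_{i-1}+1$ fails when $\pi^{-1}_{i-1}=1$ (position $1$ does not shift under insertion at index $2$, so $\sigma^{-1}_{\pi_1+1}=1$), but since the reindexing of positions is strictly increasing, all order comparisons --- and hence your recoil bijection --- are unaffected.
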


This lemma can be checked in a straightforward manner by examining each recoil $\pi_i$ in $\pi$ and the recoil's image under $\psi_1,\psi_2,$ $\psi_u$, or $\psi_d$. Under any of the first three maps, this process accounts for all of the recoils in the image permutation, but under the fourth map, there is one recoil unaccounted for in the image permutation, namely the one formed by $\pi_1$.

The following corollary is then immediate.

\begin{cor}
The permutations in $\rect(n,k)$ are encoded uniquely by the words in $L_{\rect}$ of length $n$ with $k$ $d$'s.
\end{cor}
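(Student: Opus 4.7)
The plan is to chain together the two immediately preceding results, since this corollary is essentially a refinement of the bijection between $\rect$ and $L_{\rect}$ that tracks the recoil statistic. The Corollary right after Lemma \ref{lem:rect-uniquely-expressible} already supplies a bijection between rectangular permutations and words in $L_{\rect}$, so what remains is merely to verify that length and recoil count behave correctly under this bijection.

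First I would observe that each of the four operators $\psi_1, \psi_2, \psi_u, \psi_d$ increases the size of the permutation by exactly one, directly from their definitions as insertion operators. Starting from $e_0 \in S_0$ and applying a composition of $n$ such operators therefore yields a permutation in $S_n$, so the length of the encoding word matches the size of the associated permutation.

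Next I would apply Lemma \ref{lem:rect-recoil}: $e_0$ has $0$ recoils, the operators $\psi_1, \psi_2, \psi_u$ preserve the recoil count, and $\psi_d$ increases it by exactly one. A straightforward induction on the length of the word then shows that a word containing $k$ occurrences of $d$ encodes a rectangular permutation with exactly $k$ recoils. Combining these two observations restricts the bijection of the earlier corollary to the desired bijection between $\rect(n,k)$ and the set of words in $L_{\rect}$ of length $n$ containing exactly $k$ occurrences of $d$. There is no substantive obstacle here, since all the genuine content already lives in Lemma \ref{lem:rect-uniquely-expressible} (uniqueness of the encoding) and Lemma \ref{lem:rect-recoil} (the recoil-count behavior); the corollary is a direct bookkeeping consequence.
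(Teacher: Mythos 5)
Your proposal is correct and takes essentially the same route as the paper, which presents this corollary as ``then immediate'' from the unique-expressibility result and Lemma~\ref{lem:rect-recoil}; your write-up simply makes the implicit bookkeeping (size increases by one per operator, recoil count increases only at each $d$, induction from $e_0$) explicit. There is no gap and no difference in approach.
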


\section{A regular expression for evil-avoiding permutations} \label{sec:regex-evil}

We now establish the encoding of evil-avoiding permutations of positive length as words in $L_{\evil}$, which is generated by the regular expression $((p|q)^*s^*r)^*s^+.$ 

The proof of this bijection involves some machinery in \cite{KW} used to analyze and count evil-avoiding permutations. We use slightly different notation than in \cite{KW} in some places, which we indicate explicitly.

\begin{defin}
\textup{\cite{KW}} Let $\st(n,k)$ be the set of permutations $\pi = (\pi_1, \dots, \pi_n)$ so that 
\begin{enumerate}
    \item $\pi_1 = 1$,
    \item $\pi$ is evil-avoiding,
    \item $\pi$ has $k$ recoils.
\end{enumerate}

An element of $\st(n,k)$ is called a \textup{$k$-Grassmannian permutation}.
\end{defin}

\begin{defin}
A \textup{partition} $\lambda = (\lambda_1, \dots, \lambda_k)$ of a nonnegative integer $L$ is a $k$-tuple of nonnegative integers so that $\sum_{i = 1}^k \lambda_i = L$ and $\lambda_1 \ge \lambda_2 \ge \dots \ge \lambda_k$. 
\end{defin}

\begin{defin}
\textup{\cite{KW}} A partition $\lambda$ is \textup{valid} for $n$ if $\lambda$ is contained inside a $\text{length}(\lambda) \times (n - \text{length}(\lambda))$ rectangle and is neither the empty partition nor the entire rectangle. 
\end{defin}

This definition does not require $\lambda$ to be a partition of $n$.

\begin{defin}
\textup{\cite{KW}} For $1 \le k \le n - 2$, let $\ps(n,k)$ denote the set of all sequences of partitions $(\lambda^1, \dots, \lambda^k)$ such that each $\lambda^i$ is valid for $n$, and for all $1 \le i \le k - 1,$ if $\ell$ is the smallest part of $\lambda^i$, then the first $(n - \ell)$ parts of $\lambda^{i + 1}$ are equal. If $k =0$, then $\ps(n,k)$ consists of one element, the empty sequence.
\end{defin}
The sets $\st(n,k)$ and $\ps(n,k)$ are natural to examine, as we explain. Observe that the sets $\evil(n-1,k)$ and $ \st(n,k)$ can be bijected by $f: \evil(n-1,k) \to \st(n,k)$ with $f(\pi) = (1, \pi_1 + 1, \dots, \pi_{n-1} + 1)$. The inverse of $f$ is given by $f^{-1}(\pi) = (\pi_2 - 1, \dots, \pi_n - 1)$.

In preparation for the next proposition, we recall that the \textit{Lehmer code} of a permutation $\pi$ is the tuple $c = (c_1, \dots, c_n)$ where $c_i = |\{j \mid j > i, \pi_j < \pi_i\}|.$ 

\begin{prop}
\textup{\cite{KW}} The sets $\st(n,k)$ and $\ps(n,k)$ are in bijection. To obtain the forward bijection $P$,\footnote{This operator is denoted $\Psi$ in \cite{KW}, but we call it $P$ instead to emphasize that it is a bijection and not an operator on permutations or partition sequences.} we associate to each $\pi \in \st(n, k)$ a sequence of partitions $P(\pi) = (\lambda^1, \dots, \lambda^k)$ as follows. Write the Lehmer code of $\pi^{-1}$ as $c(\pi^{-1}) = c = (c_1, \dots, c_n)$; since $\pi^{-1}$ has $k$ descents, $c$ has $k$ descents (i.e, values $c_i$ where $c_i > c_{i+1}$) in positions we denote by $a_1, \dots, a_k$ (where the position of a descent $c_i$ is the index $i$). Set $a_0 = 0$. For $1 \le i \le k$, define $\lambda^i = (\underbrace{n - a_i, \dots, n - a_i}_{a_i}) - (\underbrace{0, \dots, 0}_{a_{i-1}}, c_{a_{i-1} +1}, c_{a_{i-1} + 2}, \dots, c_{a_i})$ where subtraction of tuples is coordinatewise.
\end{prop}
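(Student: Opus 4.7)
The plan is to prove that $P\colon \st(n,k) \to \ps(n,k)$ is a well-defined bijection in four stages.

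First I would establish the general fact that for any permutation $\sigma$, the descent positions of $\sigma$ and of its Lehmer code $c(\sigma)$ coincide: a direct comparison of $c_i$ and $c_{i+1}$ shows that $\sigma_i < \sigma_{i+1}$ forces $c_i \le c_{i+1}$, while $\sigma_i > \sigma_{i+1}$ forces $c_i \ge c_{i+1} + 1$. Applied to $\sigma = \pi^{-1}$, this guarantees that $c = c(\pi^{-1})$ has exactly $k$ descents, so the positions $a_1 < \cdots < a_k$ in the definition of $P(\pi)$ are well-defined.

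Next I would check that each $\lambda^i$ is a valid partition for $n$. Weak decrease follows because the first $a_{i-1}$ entries are the constant $n - a_i$ and the remaining entries are $n - a_i$ minus the weakly increasing run $c_{a_{i-1}+1} \le \cdots \le c_{a_i}$, weakly increasing since $a_{i-1}$ and $a_i$ bracket consecutive descents of $c$. Nonnegativity, i.e.\ $c_{a_i} \le n - a_i$, is the standard bound on the Lehmer code. To see $\lambda^i$ is neither empty nor the full $a_i \times (n - a_i)$ rectangle: $\pi_1 = 1$ forces $c_1 = 0$, hence $\lambda^1_1 = n - a_1 > 0$, and for $i \ge 2$ the first $a_{i-1} \ge 1$ parts equal $n - a_i \ge 1$; conversely the strict descent $c_{a_i} > c_{a_i+1} \ge 0$ gives $\lambda^i_{a_i} < n - a_i$, so $\lambda^i$ is not the full rectangle.

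The linking condition is the crux. The smallest part of $\lambda^i$ is $\ell = n - a_i - c_{a_i}$, so $n - \ell = a_i + c_{a_i}$. Since the first $a_i$ parts of $\lambda^{i+1}$ automatically equal $n - a_{i+1}$, the condition that the first $n - \ell$ parts of $\lambda^{i+1}$ are equal reduces to the claim that $c_{a_i+1} = c_{a_i+2} = \cdots = c_{a_i + c_{a_i}} = 0$. I would prove this using the evil-avoiding hypothesis: interpret a positive $c_j$ as the existence of some index $j' > j$ with $\pi^{-1}_{j'} < \pi^{-1}_j$, and argue by case analysis that if any of those $c_j$ were positive, then combining the witnessing indices with the values forcing $c_{a_i} > 0$ and with the descent at $a_i$ would exhibit one of the forbidden patterns $2413, 4132, 4213,$ or $3214$ in $\pi$.

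Finally, for bijectivity I would construct $P^{-1}$ explicitly: given $(\lambda^1, \dots, \lambda^k) \in \ps(n,k)$, set $a_i = \text{length}(\lambda^i)$, recover $c_j = n - a_i - \lambda^i_j$ for $a_{i-1}+1 \le j \le a_i$ (and $c_j = 0$ for $j > a_k$), let $\pi^{-1}$ be the unique permutation with that Lehmer code, and verify by reversing the Stage 3 argument that $\pi \in \st(n,k)$. The main obstacle is this pattern-avoidance step in both directions: translating the four forbidden patterns into the single clean statement that $c$ vanishes on a prescribed window after each descent requires careful case analysis, whereas the remaining stages reduce to routine bookkeeping with Lehmer codes.
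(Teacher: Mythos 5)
Your Stages 1--3 are sound, and your reformulation of the linking condition is exactly right: the identity $n-\ell = a_i + c_{a_i}$ reduces membership in $\ps(n,k)$ to the vanishing window $c_{a_i+1} = \cdots = c_{a_i+c_{a_i}} = 0$ for $1 \le i \le k-1$ (and nothing is needed at $i=k$, since $c$ has no descents after $a_k$ and $c_n = 0$, so the entire tail vanishes automatically). But Stage 4 contains a genuine error: you recover $a_i$ as $\mathrm{length}(\lambda^i)$, and under the convention actually in force --- trailing zeros are dropped, as the paper does explicitly in its worked example --- this fails whenever $c_{a_k} = n - a_k$, i.e.\ whenever $\lambda^k$ ends in zero parts. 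On the paper's own example $((3,2),(1,1,1)) \in \ps(5,2)$, your recipe gives $a_2 = 3$ rather than the correct $a_2 = 4$, producing the code $(0,1,1,0,0)$ and the permutation $[1,3,4,2,5]$, which has only one descent; your candidate $P^{-1}$ does not even land in $\st(5,2)$. The robust recovery is $a_i = n - \lambda^i_1$, since the first part of every $\lambda^i$ equals $n - a_i$ (using $c_1 = 0$, forced by $\pi_1 = 1$, when $i = 1$); this is precisely how the paper's displayed inverse works, via the first parts $f_i = \lambda^i_1$ rather than lengths.

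Beyond that fixable bug, be aware that the paper supplies no proof of this proposition at all --- it is quoted from \cite{KW}, the paper's only original contribution here being the explicit description of $P^{-1}$ --- so there is no argument to compare your Stage 3 against, and Stage 3 is where the entire mathematical content lives. You assert that evil-avoidance yields the vanishing window by ``case analysis'' and that surjectivity follows ``by reversing the Stage 3 argument,'' but neither direction is carried out; as written this is a plan, not a proof, of the one claim that needs proving. If you execute it, note that the patterns $2413, 4132, 4213, 3214$ constrain $\pi$, so the case analysis on the code of $\sigma = \pi^{-1}$ must be run against the inverse patterns $3142, 2431, 3241, 3214$ --- an easy detail to get wrong in exactly the kind of index-chasing your sketch gestures at.
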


\begin{example}
If $\pi \in \st(5,2)$ is $[13254]$, then the Lehmer code of $\pi^{-1}$ is $(0,1,0,1,0)$. This Lehmer code has two descents, one at index 2 and one at index 4, so we set $a_0 = 0, a_1 = 2, a_2 = 4.$ Then $$\lambda^1 = (5 - 2, 5 - 2) - (c_1, c_2) = (3, 3) - (0, 1),$$ which we subtract coordinatewise to get $(3, 2).$ We may also calculate $$\lambda^2 = (\underbrace{5 - 4, \dots, 5 - 4}_4) - (0, 0, c_3, c_4) = (1, 1, 1, 1) - (0, 0, 0, 1) = (1, 1, 1, 0).$$ We drop the trailing 0 for brevity. We then say that $P(\pi) = (\lambda^1, \lambda^2) = ((3,2), (1,1,1)).$
\end{example}

The inverse map $P^{-1}: \ps(n,k) \to \st(n,k)$ can be described as follows. Let $(\lambda^1, \dots, \lambda^k) \in \ps(n,k)$, and let $(f_1, \dots, f_k)$ be the sequence of first parts of $\lambda^1, \dots, \lambda^k$, i.e., $f_i = \lambda_1^i.$ Then $$(((\underbrace{f_1, \dots, f_1}_{n-f_1})-\lambda^1) + ((\underbrace{f_2, \dots, f_2}_{n-f_2})-\lambda^2) + \dots + ((\underbrace{f_k, \dots, f_k}_{n-f_k})-\lambda^k))$$ is the code of a permutation $\pi^{-1}$ of $\pi \in \st(n,k).$ We define $P^{-1}(\lambda^1, \dots, \lambda^k) = \pi.$

In \cite{KW}, Kim and Williams also define the injective maps $\Psi_1$, $\Psi_2$, and $\Phi_{i,k,n}$, whose images partition the set $\ps(n,k)$ into disjoint parts and make it simpler to count recursively.

\begin{defin}
\textup{\cite{KW}} For $k \ge 1,$ the map $\Psi_1: \ps(n-1,k) \to \ps(n,k)$ takes $(\lambda^1, \dots, \lambda^k)$ to $(\mu^1, \dots, \mu^k)$ where, for all $i \ge 1$, $\mu^i$ is obtained from $\lambda^i$ by duplicating its first part.
\end{defin}
\begin{example}
$\Psi_1((2), (1, 1)) = ((2, 2), (1, 1, 1)).$
\end{example}

\begin{defin}
\textup{\cite{KW}} For $k \ge 1,$ the map $\Psi_2: \ps(n-1,k) \to \ps(n,k)$ takes $(\lambda^1, \dots, \lambda^k)$ to $(\mu^1, \dots, \mu^k)$ where, for all $i \ge 2$, $\mu^i$ is obtained from $\lambda^i$ by duplicating its first part. For $i = 1$, we split into cases based on whether all parts of $\lambda^1 = (\lambda_1^1, \dots, \lambda_r^1)$ are equal. If so, then let $\mu^i = (\lambda_1^i+1, \lambda_1^i, \dots, \lambda_r^i).$ Otherwise, we define $\mu^1 = (\lambda_1^1 + 1, \lambda_2^1, \dots, \lambda_r^1).$ 
\end{defin}
\begin{example}
$\Psi_2((2), (1, 1)) = ((3, 2), (1, 1, 1))$ and $\Psi_2((3, 2), (1, 1, 1) = (4, 2), (1, 1, 1, 1).$
\end{example}

\begin{defin}
\textup{\cite{KW}} For $k + 1 \le i \le n - 1$, we define $\Phi_{i,k,n}: \ps(i,k-1) \to \ps(n,k)$ to be the map which takes $(\lambda^1, \dots, \lambda^{k-1})$ to $((i - 1), \mu^1, \dots, \mu^{k-1})$, where $\mu^j$ is obtained from $\lambda^j$ by duplicating the first part of $\lambda^j$ $n-i$ times. 
\end{defin}
\begin{example}
$\Phi_{4,2,5}((2, 1)) = ((3), (2, 2, 1)).$
\end{example}

\begin{prop}
    \textup{\cite{KW}} The set $\ps(n,k)$ equals the disjoint union $$\Psi_1(\ps(n-1,k)) \cup \Psi_2(\ps(n-1,k)) \cup \coprod_{i=k-1}^{n-1} \Phi_{i,k,n}(\ps(i,k-1)).$$ 
\end{prop}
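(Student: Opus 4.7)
The plan is to verify three things for the claimed disjoint-union decomposition: each of $\Psi_1$, $\Psi_2$, and $\Phi_{i,k,n}$ produces a valid element of $\ps(n,k)$; the three image families are pairwise disjoint; and every sequence in $\ps(n,k)$ lies in at least one of them. Injectivity of each map will then come out as a byproduct of constructing an explicit inverse on each image, so it does not need a separate argument.

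I would handle disjointness via the shape of the first partition $\mu^1$ of the image sequence. In the $\Phi_{i,k,n}$ construction, $\mu^1 = (i-1)$ has length exactly one, whereas an examination of $\Psi_1$ and $\Psi_2$ shows that in both cases $\mu^1$ has length at least two (directly from the definitions, noting that the length-$1$ subcase of the input to $\Psi_2$ falls in the ``all parts equal'' branch, which grows the length by one). To distinguish the two $\Psi$ cases, note that $\Psi_1$ duplicates the first part, forcing $\mu^1_1 = \mu^1_2$, while both branches of $\Psi_2$ yield $\mu^1_1 > \mu^1_2$. This trichotomy on $\mu^1$ proves disjointness and simultaneously suggests how to invert each map. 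For coverage I would classify an arbitrary $(\mu^1, \dots, \mu^k) \in \ps(n,k)$ by the shape of $\mu^1$ and construct the preimage: if $\mu^1 = (i-1)$ has length one, drop $\mu^1$ and remove $n-i$ copies of the first part from each subsequent $\mu^j$ to obtain a candidate in $\ps(i, k-1)$; if $\mu^1$ has length at least two with $\mu^1_1 = \mu^1_2$, delete one copy of the first part of each $\mu^j$ to obtain a candidate in $\ps(n-1, k)$; and if $\mu^1_1 > \mu^1_2$, invert the two-case operation of $\Psi_2$ on $\mu^1$ and delete one copy of the first part of each remaining $\mu^j$.

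The main obstacle is not the classification itself but verifying that each reconstructed candidate actually satisfies the definition of a partition sequence in the appropriate $\ps(\cdot,\cdot)$. The delicate requirement is the compatibility between consecutive partitions: if the smallest part of the reconstructed $\lambda^j$ is $\ell'$ and $N$ is the new ambient size, then the first $N - \ell'$ parts of $\lambda^{j+1}$ must all be equal. I would verify this by first relating the smallest part of each reconstructed $\lambda^j$ to the smallest part of the corresponding $\mu^j$ (under $\Psi_1^{-1}$ and $\Psi_2^{-1}$ they typically coincide, while under $\Phi^{-1}$ the index bookkeeping is governed by how the first $n-i+1$ equal parts of $\mu^2$ collapse), and then applying the compatibility condition inherited from $\mu$. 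One must also verify the validity condition (neither empty nor the full rectangle) for each reconstructed partition, which is where the boundary cases concentrate and where the precise lower bound on the index $i$ in the $\coprod_i$ sum is pinned down. These checks are routine case analysis but require care to track the relationship between ambient rectangle width, partition length, and smallest part under each map.
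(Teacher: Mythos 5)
The paper itself contains no proof of this proposition: it is imported verbatim from Kim and Williams, and the surrounding text only notes that it is established in \cite{KW} (in the course of their Proposition 3.14). So the relevant comparison is with the original argument of \cite{KW}, and your proposal essentially reconstructs it. Your trichotomy on the first partition $\mu^1$ of an arbitrary sequence in $\ps(n,k)$ --- length one (image of $\Phi_{i,k,n}$, with $i$ recovered as $\mu^1_1+1$), length at least two with $\mu^1_1=\mu^1_2$ (image of $\Psi_1$), and length at least two with $\mu^1_1>\mu^1_2$ (image of $\Psi_2$, the two branches separated by whether $\mu^1$ has the form $(c+1,c,\dots,c)$) --- is exactly the right invariant: it is exhaustive, it yields pairwise disjointness (including disjointness of the $\Phi_{i,k,n}$ images for distinct $i$, since $i$ is visible in $\mu^1$), and it dictates the inverse on each piece. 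Your handling of the length-one subcase of $\Psi_2$ (it falls in the ``all parts equal'' branch and so grows the length) is the correct observation needed to keep the $\Psi$ images away from the $\Phi$ images.

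Two points deserve to be made explicit when the ``routine case analysis'' is carried out. First, each of your inverses strips a duplicated first part from every $\mu^j$ with $j\ge 2$ (in the $\Phi$ case, $n-i$ copies), so you need $\mu^j_1=\mu^j_2$ (resp.\ the first $n-i+1$ parts of $\mu^j$ equal) for all $j\ge 2$; this is not given but must be propagated by induction along the sequence: validity bounds the smallest part $\ell$ of $\mu^j$ by $n-\operatorname{length}(\mu^j)$, so once $\mu^j$ has length at least $2$ (resp.\ at least $n-i+1$), the compatibility condition forces the first $n-\ell\ge 2$ (resp.\ $\ge n-i+1$) parts of $\mu^{j+1}$ to be equal, and the induction continues. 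Your sketch gestures at this mechanism but should state it as an induction rather than a per-index check. Second, your remark that the boundary analysis ``pins down'' the lower limit of the coproduct is apt and in fact detects a defect in the statement as printed: $\Phi_{i,k,n}$ is only defined for $k+1\le i\le n-1$, and validity of $\mu^1=(i-1)$ together with nonemptiness of $\ps(i,k-1)$ forces $i\ge k+1$, so the coproduct should be read as starting at $i=k+1$ (as in \cite{KW}); the terms with $i\in\{k-1,k\}$ are empty for $k\ge 2$ and ill-defined for $k=1$. With those two items spelled out, your outline is a correct and essentially self-contained proof.
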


Kim and Williams used this to prove a recurrence for the size of $\ps(n,k)$ in \cite{KW} Proposition 3.14. It also implies the following corollary:

\begin{cor}
    Every partition sequence in $\ps(n_0,k_0)$ is a composition of maps in $\{\Psi_1,\Psi_2, \Phi_{i,k,n} \}$ applied to an empty sequence in some $\ps(n_1,0).$
\end{cor}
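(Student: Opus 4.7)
The plan is a direct induction on $n_0 + k_0$, with the preceding Proposition serving as the engine. For the base case, observe that when $k_0 = 0$ the set $\ps(n_0, 0)$ contains by definition only the empty sequence, which is trivially the empty composition of maps applied to an empty sequence in $\ps(n_0, 0)$. So the claim holds for every $(n_0, 0)$.

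For the inductive step, fix $(n_0, k_0)$ with $k_0 \ge 1$ and assume the claim for all pairs $(n, k)$ with $n + k < n_0 + k_0$. Given $\lambda \in \ps(n_0, k_0)$, the Proposition writes $\ps(n_0, k_0)$ as the disjoint union of $\Psi_1(\ps(n_0-1, k_0))$, $\Psi_2(\ps(n_0-1, k_0))$, and the sets $\Phi_{i, k_0, n_0}(\ps(i, k_0-1))$ for $k_0 - 1 \le i \le n_0 - 1$. Hence there is exactly one map $M$ among $\Psi_1, \Psi_2, \Phi_{k_0-1, k_0, n_0}, \ldots, \Phi_{n_0-1, k_0, n_0}$ with $\lambda \in \ig(M)$, and a unique preimage $\lambda' = M^{-1}(\lambda)$. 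The preimage lies in either $\ps(n_0-1, k_0)$ (if $M \in \{\Psi_1, \Psi_2\}$) or $\ps(i, k_0-1)$ (if $M = \Phi_{i, k_0, n_0}$); in both cases the new pair $(n', k')$ strictly decreases $n + k$, since $(n_0-1) + k_0 < n_0 + k_0$ and $i + (k_0-1) \le (n_0-1) + (k_0-1) < n_0 + k_0$. By the inductive hypothesis, $\lambda'$ can be written as a composition of maps from $\{\Psi_1, \Psi_2, \Phi_{i, k, n}\}$ applied to an empty sequence; composing with $M$ on the outside yields such an expression for $\lambda$.

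No step is expected to present a genuine obstacle: the corollary is really just an unravelling of the Proposition, and the inductive machinery is entirely standard. The one point worth a short explicit remark in the write-up is termination, which is secured by the strict decrease of $n + k$ at every step together with the fact that the recursion can terminate only when $k$ reaches $0$, which is precisely the base case. As a free bonus from the disjointness of the union in the Proposition, the decomposition $\lambda = M(\lambda')$ is unique at each stage, so in fact the composition expressing $\lambda$ is unique (although only existence is asserted in the statement).
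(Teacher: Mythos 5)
Your proposal is correct and takes essentially the same route as the paper: the paper presents this corollary as an immediate consequence of the preceding proposition (the disjoint decomposition of $\ps(n,k)$ into the images of $\Psi_1$, $\Psi_2$, and the maps $\Phi_{i,k,n}$), leaving the routine induction implicit, and your write-up simply makes that induction on $n_0+k_0$ explicit, with base case $k_0=0$ and the peeling-off step supplied by the proposition. Your closing observation that injectivity and disjointness yield \emph{uniqueness} of the composition is also consistent with the paper, which uses exactly this unique factorization when encoding evil-avoiding permutations as words.
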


\begin{theorem} \label{thm:evil-factor}
Every evil-avoiding permutation can be written uniquely as a composition of maps in the set $\{\psi_p,\psi_q,\psi_r,\psi_s\}$ applied to the identity in $S_0$.
\end{theorem}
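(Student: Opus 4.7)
The plan is to leverage the existing Kim--Williams bijection $P \circ f: \evil(n-1, k) \to \ps(n, k)$ together with their proposition decomposing $\ps(n, k)$ as the disjoint union $\Psi_1(\ps(n-1,k)) \cup \Psi_2(\ps(n-1,k)) \cup \coprod_{i=k-1}^{n-1} \Phi_{i,k,n}(\ps(i,k-1))$, and the immediate corollary (stated just above) that every partition sequence is uniquely a composition of maps in $\{\Psi_1, \Psi_2, \Phi_{i,k,n}\}$ applied to an empty sequence. The strategy is to transport this decomposition through the bijection and identify the induced operators on $\evil$ with compositions of $\psi_p, \psi_q, \psi_r, \psi_s$.

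Concretely, I would establish the following correspondences. First, $\psi_p$ and $\psi_q$ correspond to $\Psi_1$ and $\Psi_2$ respectively, each being a single-step, size-increasing operator that preserves the number of recoils (consistent with $\Psi_1, \Psi_2$ being maps $\ps(n-1,k) \to \ps(n,k)$ within a fixed $k$). Second, each $\Phi_{i,k,n}$ corresponds to $\psi_s^{n-1-i} \circ \psi_r$: an application of $\psi_r$ (which adds one new recoil, corresponding to the introduction of the new initial partition $(i-1)$ and the shift from $k-1$ to $k$ recoils), followed by $n-1-i$ applications of $\psi_s$ (which pad without changing the recoil count, matching the $(n-i)$-fold duplication of the first part of each $\mu^j$ in the definition of $\Phi_{i,k,n}$). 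The size arithmetic is consistent since $\psi_r$ and each $\psi_s$ increase permutation size by $1$, giving a total increase of $n - i$. The technical core is verifying these correspondences by tracking how each operator acts on the Lehmer code of the inverse permutation, which is the data from which $P$ extracts the partition sequence.

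The main obstacle is the detailed verification that $\psi_q \leftrightarrow \Psi_2$, because both operators are defined by a two-case split: $\psi_q$ distinguishes sandwiched versus non-sandwiched permutations (the former requiring the $\gamma$ correction), while $\Psi_2$ distinguishes whether all parts of $\lambda^1$ are equal. One must match these cases correctly and chase the $\gamma$ shift through $P$ to confirm it corresponds precisely to the "$(\lambda_1^1 + 1, \lambda_1^1, \dots)$" versus "$(\lambda_1^1 + 1, \lambda_2^1, \dots)$" split on the partition side. A secondary subtlety is handling identities: $\psi_s$ applied to $e_0$ generates all identities $e_m$, which correspond to the empty partition sequences indexing the base of the KW recursion, so that the regex constraint that $p, q$ never precede the terminal string of $s$'s (equivalently, that $\psi_p$ and $\psi_q$ are never applied to an identity) falls out automatically. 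Once the three correspondences above are established, the existence and uniqueness of the decomposition of every evil-avoiding permutation as a composition in $\{\psi_p, \psi_q, \psi_r, \psi_s\}$ follows directly from the disjointness and covering properties of the KW decomposition, proving Theorem \ref{thm:evil-factor}.
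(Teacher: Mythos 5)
Your proposal is correct and takes essentially the same route as the paper: the paper likewise transports the Kim--Williams decomposition of $\ps(n,k)$ through $P \circ f$, verifies $\psi_p \leftrightarrow \Psi_1$, $\psi_q \leftrightarrow \Psi_2$ (matching the sandwiched/non-sandwiched split of $\psi_q$ to the two cases of $\Psi_2$, exactly the technical core you flag), and $\psi_{i,k,n} \leftrightarrow \Phi_{i,k,n}$ by Lehmer-code computations on inverse permutations, then uses the identity $\psi_{i,k,n} = \psi_s^{n-i-1} \circ \psi_r$ so that existence and uniqueness follow from the disjointness and covering of the Kim--Williams decomposition. Your size arithmetic and treatment of the identity permutations as the base case also agree with the paper's argument, so there is nothing to correct.
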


Let $\psi_{i,k,n}$ be a map sending $(\pi_1, \dots, \pi_{i-1}) \to (\pi_1 + n - i, \pi_2 + n - i, \dots, \pi_{i - 1} + n - i, 1, 2, \dots, n - i)$. To prove this theorem, we provide alternative descriptions of maps $\psi_p,\psi_q,\psi_{i,k,n}$ which allow us to use a result from \cite{KW}.

\begin{theorem}
The maps $\psi_p,\psi_q,\psi_{i,k,n}$ may be written as
    \begin{align*}
    \psi_p(\pi) &= f^{-1} \circ P^{-1} \circ \Psi_1 \circ P \circ f(\pi) \\
    \psi_q(\pi) &= f^{-1} \circ P^{-1} \circ \Psi_2 \circ P \circ f(\pi) \\
    \psi_{i,k,n} (\pi) &= f^{-1} \circ P^{-1}\circ \Phi_{i,k,n} \circ P \circ f(\pi).
\end{align*}
\end{theorem}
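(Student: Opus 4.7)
The plan is to recast each identity as a commutativity statement. Because $f\colon \evil \to \st$ and $P\colon \st \to \ps$ are bijections on their respective domains, the identity $\psi_j = f^{-1} \circ P^{-1} \circ \Theta_j \circ P \circ f$ (writing $\Theta_p = \Psi_1$, $\Theta_q = \Psi_2$, and $\Theta_{i,k,n} = \Phi_{i,k,n}$ for brevity) is equivalent to the commutation
\[
\Theta_j \circ P \circ f \;=\; P \circ f \circ \psi_j
\]
on the relevant domain. I would verify each identity in this form by unwinding both sides via the Lehmer-code description of $P$: compute the code of $f(\pi)^{-1}$, apply $\Theta_j$ by modifying the partition sequence, and compare with the code of $f(\psi_j(\pi))^{-1}$ obtained directly from the explicit formula for $\psi_j$.

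For $\psi_p$, a short calculation shows that $\Psi_1$ corresponds at the Lehmer-code level to inserting a single $0$ into position $2$; decoding the resulting code yields exactly the permutation obtained by prepending $1$ to $f(\pi)$ and shifting all other values up by $1$, which is $f(\psi_p(\pi))$. For $\psi_{i,k,n}$, the prepended partition $(i-1)$ in $\Phi_{i,k,n}$ contributes a structured initial block to the code, while the $(n-i)$-fold duplication of the first part of each $\mu^j$ inserts a run of equal code entries at positions corresponding to the new tail $1, 2, \ldots, n-i$; I check term-by-term that this matches the Lehmer code of $(1, \pi_1 + n - i + 1, \ldots, \pi_{i-1} + n - i + 1, 2, 3, \ldots, n - i + 1) = f(\psi_{i,k,n}(\pi))$.

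The $\psi_q$ case is the most delicate and constitutes the main obstacle, because both $\psi_q$ and $\Psi_2$ have branching definitions. I would first prove the equivalence of these branches as a lemma: writing $\sigma = f(\pi)$ and letting $a_1$ be the first descent position of the code of $\sigma^{-1}$, the partition $\lambda^1 \in P(\sigma)$ has all parts equal if and only if $\pi$ is $(a, b)$-sandwiched; in this case $\lambda^1$ has $a + 1$ parts each equal to $|\sigma| - a_1$ and $b = a_1 - a - 1$. The forward direction follows by unwinding the Lehmer code on a sandwiched $\sigma$ (which forces $c_1 = \cdots = c_{a+1} = 0$ from the prefix $1, 2, \ldots, a+1$ and $c_{a+2} = \cdots = c_{a_1} = |\sigma| - a_1$ from the increasing suffix), and the converse recovers the sandwich structure from these code values.

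Given the branching equivalence, the two cases of the $\psi_q$ identity are verified separately. The non-sandwich branch is a direct Lehmer-code computation showing that $\rho_{t+1, 1}$ matches the effect of incrementing $\lambda^1_1$. The sandwich branch is the technical heart: the additional shift $\gamma_{n-b+2, a+2}$ in the definition of $\psi_q$ must induce precisely the ``prepend $\lambda^1_1 + 1$'' modification of $\Psi_2$, rather than merely incrementing an existing part. Tracking the Lehmer code through the composite $\gamma_{n-b+2, a+2} \circ \rho_{t+1, 1}$ requires careful bookkeeping of the inserted value and the reinserted entry, but introduces no new ideas beyond the correspondence already established by the lemma.
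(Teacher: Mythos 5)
Your proposal follows essentially the same route as the paper's proof: it reformulates each identity as the commutation $\Theta_j \circ P \circ f = P \circ f \circ \psi_j$ and verifies it by computing the Lehmer codes of $f(\pi)^{-1}$ and $f(\psi_j(\pi))^{-1}$ and comparing the resulting partition sequences, with the same case split on whether $\pi$ is $(a,b)$-sandwiched for $\psi_q$ (and your specific claims check out against the paper's computations, e.g.\ that $\lambda^1$ has $a+1$ parts equal to $n-2-a-b$ with $a_1 = a+b+1$). Your one refinement --- isolating as a lemma the biconditional that all parts of $\lambda^1$ are equal if and only if $\pi$ is sandwiched --- is a point the paper verifies only in the forward direction (its non-sandwiched case tacitly assumes the converse when matching the computation to the ``increment the first part'' branch of $\Psi_2$), so making it explicit is a genuine though minor tightening rather than a different approach.
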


\begin{proof}
To show that $\psi_p(\pi) = (1, \pi_1 + 1, \dots, \pi_{n-2} + 1) = f^{-1} \circ P^{-1} \circ \Psi_1 \circ P \circ f(\pi)$, we compute $ P\circ f (\pi)$ and compare with $P \circ f(1, \pi_1 + 1, \dots, \pi_{n-2} + 1)$. Let $c = (c_1, \dots, c_{n-1})$ be the Lehmer code of the inverse of $f(\pi)$, and let $a_1 \dots, a_{k}$ be the positions of the descents in $c$. Then the $i$th partition in the partition sequence $P \circ f(\pi)$ may be written as $$\lambda^i = (\underbrace{n - 1 - a_i, \dots, n - 1 - a_i}_{a_i}) - (\underbrace{0, \dots, 0}_{a_{i-1}}, c_{a_{i-1}+1}, \dots, c_{a_i}).$$

To get the Lehmer code of the inverse of $ f(1, \pi_1 + 1, \dots, \pi_{n-2} + 1)$ from $c$, we insert a 0 at the start of $c$ because the first entry of the Lehmer code of the inverse of a permutation counts the number of values in the permutation before the 1. Thus, the $i$th partition in the partition sequence $P \circ f(1, \pi_1 + 1, \dots, \pi_{n-2} + 1)$ is $$\mu^i = (\underbrace{n - (a_i + 1), \dots, n - (a_i + 1)}_{a_i + 1}) - (\underbrace{0, \dots, 0}_{a_{i-1} + 1}, c_{a_{i-1}+1}, \dots, c_{a_i}).$$

Comparing $\lambda^i$ with $\mu^i$, we see that the latter is the same as the former, except with an extra copy of $n - 1 - a_i$, which is the largest part of $\lambda^i$. (Observe that in the expression for $\lambda^i$, the subtrahend clearly has leading zeroes when $i \ge 2$, and when $i = 1$, we have $c_1 = 0$.) Note also that $\Psi_1$ acts on a partition sequence by mapping each partition to one with the first part duplicated, so $\Psi_1 \circ P \circ f(\pi) = P \circ f(1, \pi_1 + 1, \dots, \pi_{n-2} + 1)$, as desired.

Let $\pi_{ikn} = (\pi_1 + n - i, \pi_2 + n - i, \dots, \pi_{i - 1} + n - i, 1, 2, \dots, n - i)$. Now, we show that $$\psi_{i,k,n}(\pi) = \pi_{ikn} = f^{-1} \circ P^{-1} \circ \Phi_{i,k,n} \circ P \circ f(\pi).$$ (We consider $\psi_{i,k,n}$ before $\psi_q$ to make the exposition clearer.) Using a similar technique as for $\psi_p,$ we compute $P \circ f (\pi)$ and compare with $P \circ f(\pi_{ikn}).$ As computed earlier in this proof, the $j$th partition in the partition sequence $\pi$ may be written as $$\lambda^j = (\underbrace{i - a_j, \dots, i - a_j}_{a_j}) - (\underbrace{0, \dots, 0}_{a_{j - 1}}, c_{a_{j - 1} + 1}, \dots, c_{a_j})$$ with $c$ defined as above. 

To get the Lehmer code of the inverse of $f(\pi_{ikn})$, observe that 
\begin{align*}
    \pi_{ikn} &= (\pi_1 + n - i, \pi_2 + n - i, \dots, \pi_{i - 1} + n - i, 1, 2, \dots, n - i), \\
    f(\pi_{ikn}) &= (1, \pi_1 + n - i + 1, \pi_2 + n - i + 1, \dots, \pi_{i - 1} + n - i + 1, 2, \dots, n - i + 1), \text{and} \\
    (f(\pi_{ikn}))^{-1} &= (1, i + 1, i + 2, \dots, n, \pi_1^{-1} + 1, \dots, \pi_{i - 1}^{-1} + 1),
\end{align*}

so we have as our Lehmer code $(0, \underbrace{i - 1, \dots, i - 1}_{n - i}, c_2, c_3, \dots, c_i).$

Thus, the first partition in the partition sequence $P \circ f(\pi_{ikn})$ is 
\begin{align*}
    \mu^1 &= (\underbrace{n - (n - i + 1), \dots, n - (n - i + 1)}_{n - i  + 1}) - (0, \underbrace{i - 1, \dots, i - 1}_{n - i}) \\
    &= (i - 1, \underbrace{0, \dots, 0}_{n - i}) \\
    &= (i - 1).
\end{align*}

The $j$th partition, for $j > 1$, in the partition sequence $P \circ f(\pi_{ikn})$ is

\begin{align*}
    \mu^j &= (\underbrace{n - (a_{j-1} + n - i), \dots, n - (a_{j-1} + n - i)}_{a_{j-1} + n - i}) - (\underbrace{0, \dots, 0}_{a_{j-2} + n - i}, c_{a_{j-2} + 1}, \dots, c_{a_{j-1}}) \\
    &= (\underbrace{i - a_{j-1}, \dots, i - a_{j-1}}_{a_{j-1} + n - i}) - (\underbrace{0, \dots, 0}_{a_{j-2} + n - i}, c_{a_{j-2} + 1}, \dots, c_{a_{j-1}}).
\end{align*}

Comparing $\lambda^j$ with $\mu^{j+1}$, we observe that the latter is the same as the former, except with $n-i$ leading copies of $i - a_{j}$. Note that $\Phi_{i,k,n}$ takes a partition sequence $(\lambda^1, \lambda^2, \dots, \lambda^{k-1})$ to $((i - 1), \mu^1, \mu^2, \dots, \mu^{k-1}))$, where $\mu^j$ is obtained from $\lambda^j$ by duplicating the first part of $\lambda^j$ $n-i$ times, so the partition sequences match up as desired, and in fact $\Phi_{i,k,n} \circ P \circ f(\pi) = P \circ f (\pi_{ikn})$.

Finally, we consider $\psi_q$, and divide into cases based on whether $\pi$ is $(a,b)$-sandwiched. 

\noindent
\textbf{Case 1:} $\pi$ is $(a,b)$-sandwiched. 

We abbreviate $g(\pi) = (a + b + 1, 1, 2,\dots, a + 1, \pi_{a+1}+1, \pi_{a+2} + 1,\dots, \pi_{n-b-2}+1, a + 2, a + 3\dots, a + b)$. We write the Lehmer code of $f(\pi)^{-1}$ as $c = (c_1, c_2, \dots, c_{n-1}) $. where values $c_1, \dots, c_{a+1}$ are equal to 0 and values $c_{a+2}, \dots, c_{a + b + 1}$ are equal to $n - 2 - a - b.$ Now observe that $a_1 = a + b + 1$ (i.e., the first descent in $c$ is at index $a + b + 1$) because the value $a + b + 1$ appears at the end of $f(\pi)$ and the value $a + b + 2$ appears earlier (i.e., after the first ascending run and before the second one). Thus, the number of values that appear before and exceed $a + b + 1$ is greater than the number of values that appear before and exceed $a + b + 2.$ Equivalently, $c_{a + b + 1} > c_{a + b + 2}$.

We can compute
\begin{align*}
    \lambda^1 &= (\underbrace{n - 1 - a_1, \dots, n - 1 - a_1}_{a_1}) - (c_{1}, c_{2}, \dots, c_{a_1}) \\
    &= (\underbrace{n - 1 - (a + b + 1), \dots, n - 1 - (a + b + 1)}_{a + b + 1}) - (\underbrace{0, \dots, 0}_{a + 1}, \underbrace{n - 2 - a - b, \dots, n - 2 - a - b}_{b}) \\
    &= (\underbrace{n - 2 - a - b, \dots, n - 2 - a - b}_{a + 1}),
\end{align*} i.e., all nonzero parts of $\lambda^1$ are equal. 

For $i > 1$, we can write
\begin{align*}
    \lambda^i &= (\underbrace{n - 1 - a_i, \dots, n - 1 - a_i}_{a_i}) - (\underbrace{0, \dots, 0}_{a_{i - 1}}, c_{a_{i - 1} + 1}, \dots, c_{a_i}). 
\end{align*}

Now observe that the Lehmer code for $f(g(\pi))^{-1}$ can be written as $$(0, \underbrace{1, \dots, 1}_{a + 1}, \underbrace{n - 1 - a - b, \dots, n - 1 - a - b}_{b - 1}, 0, c_{a + b + 1}, c_{a + b + 2}, \dots, c_{n}).$$ From this, we can calculate that the first partition in the partition sequence $P \circ f(g(\pi))$ is

\begin{align*}
    \mu^1 &= (\underbrace{n - a_1, \dots, n - a_1}_{a_1}) - (0, \underbrace{1, \dots, 1}_{a + 1}, \underbrace{n - 1 - a - b, \dots, n - 1 - a - b}_{b - 1}) \\
    &= (\underbrace{n - (a + b + 1), \dots, n - (a + b + 1)}_{a + b + 1}) - (0, \underbrace{1, \dots, 1}_{a + 1}, \underbrace{n - 1 - a - b, \dots, n - 1 - a - b}_{b - 1}) \\
    &= (n - 1 - a - b)(\underbrace{n - 2 - a - b, \dots, n - 2 - a - b}_{a+1}),
\end{align*}

and the $i$th partition for $i > 1$ is

\begin{align*}
    \mu^i &= (\underbrace{n - (a_i + 1), \dots, n - (a_i + 1)}_{a_i + 1}) - (\underbrace{0,\dots,0}_{a_{i-1} +1}, c_{a_{i-1}+1}, \dots, c_{a_i}) \\
    &= (\underbrace{n - 1 - a_i, \dots, n - 1 - a_i}_{a_i + 1}) - (\underbrace{0,\dots,0}_{a_{i-1} +1}, c_{a_{i-1}+1}, \dots, c_{a_i}).
\end{align*}

Note that in the first subcase, when $\pi$ is $(a,b)$-sandwiched, $\Psi_2$ takes $(\lambda^1, \dots, \lambda^k)$ to $(\mu^1, \dots, \mu^k)$, where $\mu^1$ is $\lambda^1$ with the first part duplicated and then increased by 1, and for $i > 1$, $\mu^i$ is $\lambda^i$ with the first part duplicated. This is precisely what we found, so $\Psi_2 \circ P \circ f(\pi) = P \circ f(g(\pi)).$ It is a tedious though straightforward term-by-term arithmetic calculation to verify $$g(\pi) =[t+1,\pi_1+\mathbb{1}_{\pi_1 > t}, \dots,\pi_{a+1} + \mathbb{1}_{\pi_{a+1}>t},\pi_{n-b+2} + \mathbb{1}_{\pi_{n-b+2}>t}, $$ $$  \pi_{a+2} + \mathbb{1}_{\pi_{a+2} > t}, \dots, \pi_{n-b+1} + \mathbb{1}_{\pi_{n-b+1}>t},   \pi_{n-b+3} + \mathbb{1}_{\pi_{n-b+3}>t}, \dots, \pi_n + \mathbb{1}_{\pi_n>t}].$$

\noindent
\textbf{Case 2:} $\pi$ is not $(a,b)$-sandwiched. 

The proof is similar to the one for the previous case. As computed earlier, the $i$th partition in the partition sequence $P\circ f(\pi)$ may be written as $$\lambda^i = (\underbrace{n - 1 - a_i, \dots, n - 1 - a_i}_{a_i}) - (\underbrace{0, \dots, 0}_{a_{i-1}}, c_{a_{i-1}+1}, \dots, c_{a_i})$$ and, in particular, $$\lambda^1 = (\underbrace{n - 1 - a_1, \dots, n - 1 - a_1}_{a_1}) - (c_1, c_2, \dots, c_{t+1}).$$ Then the Lehmer code of $$f(t + 1, \pi_1 + \mathbb{1}_{\pi_1 > t}, \dots, \pi_{n - 2} + \mathbb{1}_{\pi_{n - 2} > t})^{-1}$$ is $$(0, c_2 + 1, c_3 + 1, \dots, c_{t+1}+1, 0, c_{t+2}, c_{t+3}, \dots, c_n)$$ with $a_1 = t + 1.$ We may then write $$\mu^1 = (\underbrace{n - a_1, \dots, n - a_1}_{a_1}) - (c_1, c_2 + 1, c_3 + 1, \dots, c_{t+1})$$ and then comparing $\mu^1$ with $\lambda^1$ part by part, we see that $\mu^1$ is identical to $\lambda^1$ except with the first part incremented by $1$. Since there is a 0 at index $t+2$ in the Lehmer code for $\pi$, offsetting the Lehmer code for $f(t + 1, \pi_1 + \mathbb{1}_{\pi_1 > t}, \dots, \pi_{n - 2} + \mathbb{1}_{\pi_{n - 2} > t})^{-1}$ back by 1, we get for $i > 1$,

\begin{align*}
    \mu^i &= (\underbrace{n - (a_i + 1), \dots, n - (a_i + 1)}_{a_i + 1}) - (\underbrace{0,\dots,0}_{a_{i-1} +1}, c_{a_{i-1}+1}, \dots, c_{a_i}) \\
    &= (\underbrace{n - 1 - a_i, \dots, n - 1 - a_i}_{a_i + 1}) - (\underbrace{0,\dots,0}_{a_{i-1} +1}, c_{a_{i-1}+1}, \dots, c_{a_i}).
\end{align*}

So in the second subcase, when $\pi$ is not $(a,b)$-sandwiched, $\Psi_2$ takes $(\lambda^1, \dots, \lambda^k)$ to $(\mu^1, \dots, \mu^k)$ where $\mu^1$ is $\lambda^1$ with the first part increased by 1, and for $i > 1$, $\mu^i$ is $\lambda^i$ with the first part duplicated. We find that $\Psi_2 \circ P \circ f(\pi) = P \circ f(t + 1, \pi_1 + \mathbb{1}_{\pi_1 > t}, \dots, \pi_{n - 2} + \mathbb{1}_{\pi_{n - 2} > t}).$ So the forms for $\psi_p,\psi_q,\psi_{i,k,n}$ stated at the start of this proposition are equivalent to the forms provided in Section \ref{sec:the-bij}. \end{proof}

\begin{prop}\label{prop:evil-disjoint}
If $k = 0$, then $\evil(n,k)$ contains only the identity permutation of length $n$. For $k>0$, $$\evil(n,k) = \psi_p(\evil(n-1,k)) \cup \psi_q(\evil(n-1,k)) \cup \coprod_{i=k-1}^{n-1} \!\psi_{i,k,n}(\evil(i,k-1))$$

and the terms on the right-hand side are pairwise disjoint.
\end{prop}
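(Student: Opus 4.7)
The plan is to transfer the Kim--Williams disjoint decomposition of $\ps(n,k)$ through the composite bijection $P \circ f$ to obtain the claimed decomposition of $\evil(n,k)$.

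First I handle the case $k = 0$ directly: a permutation has no recoils iff its inverse has no descents iff it is the identity, so $\evil(n, 0) = \{e_n\}$, giving the first assertion.

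For $k > 0$, observe that $f$ restricts to a bijection $\evil(m, k) \to \st(m+1, k)$ and $P$ is a bijection $\st(m+1, k) \to \ps(m+1, k)$, so the composite $P \circ f \colon \evil(m, k) \to \ps(m+1, k)$ is a bijection. The Kim--Williams proposition cited just above asserts the disjoint decomposition
$$\ps(n+1, k) = \Psi_1(\ps(n, k)) \;\cup\; \Psi_2(\ps(n, k)) \;\cup\; \coprod_{i} \Phi_{i, k, n+1}(\ps(i, k-1)).$$
I apply $(P \circ f)^{-1}$ termwise. Using the functional identities $\psi_p = f^{-1}P^{-1}\Psi_1 Pf$, $\psi_q = f^{-1}P^{-1}\Psi_2 Pf$, and $\psi_{i,k,n} = f^{-1}P^{-1}\Phi_{i,k,n} Pf$ supplied by the preceding theorem, each term on the right translates into the corresponding term in the statement of the proposition, giving the desired union. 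Disjointness transfers automatically because $(P \circ f)^{-1}$ is a bijection and therefore sends disjoint sets to disjoint sets.

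The main bookkeeping obstacle is reconciling the indexing conventions. In particular, I want to verify that the range $i = k-1, \ldots, n-1$ stated in the proposition matches the range of $i$ in the Kim--Williams decomposition of $\ps(n+1,k)$ once the shift $\evil(m, k) \leftrightarrow \ps(m+1, k)$ is taken into account, and that the sizing convention for $\psi_{i,k,n}$ is consistent with the notation $\psi_{i,k,n}(\evil(i, k-1))$ as written. Once this is settled, the proof is essentially a mechanical transfer of a known disjoint union along a known bijection via functional identities already established; no new ideas about the structure of evil-avoiding permutations themselves are needed, which is the payoff of having translated $\psi_p$, $\psi_q$, $\psi_{i,k,n}$ into the partition-sequence language in advance.
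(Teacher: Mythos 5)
Your proposal is correct and follows essentially the same route as the paper: the paper likewise disposes of $k=0$ by noting nonidentity permutations have a recoil, invokes the disjoint decomposition $\ps(n,k) = \Psi_1(\ps(n-1,k)) \cup \Psi_2(\ps(n-1,k)) \cup \coprod_{i}\Phi_{i,k,n}(\ps(i,k-1))$ from the proof of Kim--Williams Proposition 3.14, and conjugates both sides by $P \circ f$ via the previously established identities $\psi_p = f^{-1}P^{-1}\Psi_1 P f$, etc. If anything, you are more explicit than the paper about the size shift $\evil(m,k) \leftrightarrow \ps(m+1,k)$ and the indexing conventions for $\psi_{i,k,n}$, which the paper's one-line ``after conjugating'' step glosses over.
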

\begin{proof}
The first sentence of the proposition is true because every nonidentity permutation has at least 1 recoil. To see the second part, observe that from the proof of Proposition 3.14 in \cite{KW} we have a very similar result, namely 

$$\ps(n,k) = \Psi_1(\ps(n-1,k)) \cup \Psi_2(\ps(n-1,k)) \cup \coprod_{i=k-1}^{n-1} \!\Phi_{i,k,n}(\ps(i,k-1)),$$ where the terms on the right-hand side are disjoint.

After conjugating both sides by $P \circ f$, we get the desired result.
\end{proof}

\begin{prop}\label{prop:evil-recoil}
    The operators $\psi_p,\psi_q,$ and $\psi_s$ preserve the number of recoils. The operator $\psi_r$ increases the number of recoils by one.
\end{prop}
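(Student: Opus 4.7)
The plan is to verify each of the four operators by explicitly determining, from the insertion description in the table, the position of every value in $\psi(\pi)$ in terms of positions in $\pi$, and then reading off recoils by comparing positions of consecutive values $i$ and $i+1$.

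For $\psi_p$ and $\psi_r$, the analyses are nearly symmetric: both shift every value of $\pi$ up by one while inserting a new $1$. For $i \geq 2$, value $i$ in the image sits at the position that value $i-1$ occupied in $\pi$ (shifted once), so $i$ is a recoil of the image iff $i-1$ is a recoil of $\pi$. Only the new value $1$ differs between the two: it sits at position $1$ in $\psi_p(\pi)$ (so $1$ is not a recoil) and at position $n+1$ in $\psi_r(\pi)$ (so $1$ is a new recoil). Hence $\psi_p$ preserves the recoil count and $\psi_r$ adds exactly one. For $\psi_s$, the identity case $\pi = e_n$ is trivial (both $\pi$ and $\psi_s(\pi)$ have zero recoils); otherwise $\pi$ ends in a genuine $e_t$ suffix with $t \geq 1$ and prefix values in $\{t+1, \ldots, n\}$. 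In $\pi$, value $t$ is a recoil (value $t+1$ appears in the prefix, before position $n$); in $\psi_s(\pi)$ the suffix becomes $e_{t+1}$, so $t$ loses its recoil status but $t+1$ gains it (value $t+2$ now sits in the prefix, before position $n+1$). The remaining recoils correspond bijectively under the uniform shift of prefix values, so the count is preserved.

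The main work is $\psi_q$, because of its two-case definition. In the non-sandwiched case, $\psi_q = \rho_{t+1, 1}$ where $t$ is the least recoil value of $\pi$. Inserting $t+1$ at position $1$ keeps $t$ a recoil (value $t$ still occurs after $t+1$, which is now at position $1$), does not make $t+1$ a recoil (it is itself at position $1$), and for $i \geq t+2$ the uniform shift of large values gives a bijective correspondence between recoils of $\pi$ at index $i-1$ and recoils of $\psi_q(\pi)$ at index $i$. In the sandwiched case, with $\pi = [1, \ldots, a, \pi_{a+1}, \ldots, \pi_{n-b}, a+1, \ldots, a+b]$ and $t = a+b$, the explicit image $[a+b+1, 1, \ldots, a+1, \pi_{a+1}+1, \ldots, \pi_{n-b}+1, a+2, \ldots, a+b]$ makes the check tractable: values $1, \ldots, a+1$ at consecutive positions $2, \ldots, a+2$ contribute no recoil; value $a+b$ at the last position versus $a+b+1$ at position $1$ gives the unique recoil at index $a+b$ in $\psi_q(\pi)$; and for $i \geq a+b+2$, the middle values of $\pi$ (shifted by two) yield the same correspondence as in the non-sandwiched case.

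The hard part will be the bookkeeping in the sandwiched subcase of $\psi_q$: one must verify that the relocation $\gamma_{n-b+2, a+2}$ and the boundary transitions between positions $a+2, a+3$ and between $n-b+2, n-b+3$ introduce no spurious recoils among values in the range $[a+1, a+b+1]$, and that the middle-value correspondence for $i \geq a+b+2$ captures exactly all recoils of $\pi$ exceeding $t$. These are finite mechanical checks that follow from the explicit form of $\psi_q(\pi)$, after which summing the contributions yields that $\psi_q(\pi)$ has the same number of recoils as $\pi$.
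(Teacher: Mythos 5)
Your proposal is correct and follows essentially the same route as the paper's proof: track where each value of $\pi$ lands under each operator, match recoils of $\pi$ bijectively with recoils of the image (the least recoil $t$ staying put or shifting to $t+1$, larger recoils shifting up by one, and $\psi_r$ creating the new recoil $1$), with the same casework on sandwiched versus non-sandwiched $\pi$ for $\psi_q$ via the explicit formula for the image. One small nitpick: when $b=1$ the value $a+b$ sits at position $a+2$ rather than at the last position of $\psi_q(\pi)$, but since $a+b+1$ is at position $1$ it is a recoil either way, so your count goes through unchanged.
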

\begin{proof}
Observe that each value in $\psi_{v}(\pi)$ for any $v \in \{p,q,s\}$ depends on at most one value in $\pi$, and no two values in $\psi_{v}(\pi)$ depend on the same value in $\pi$. Throughout the proof, we say that a recoil $x$ at index $i$ in $\pi$ is mapped to a recoil $y$ at index $j$ in $\sigma = \psi_{v}(\pi)$ if the expression for $\sigma_j$ depends on $\pi_i$ and the evaluation $y$ of $\sigma_j$ at $x = \pi_i$ is a recoil.

To see that $\psi_p$ preserves the number of recoils, observe that a recoil $\pi_i$ in $\pi$ is mapped to recoil $\pi_i + 1$, and this map produces all recoils in $\psi_p(\pi).$ (The value 1 is first in $\psi_p(\pi)$ so cannot be a recoil.)

To see that $\psi_q$ preserves the number of recoils, we casework on whether $\pi$ is $(a,b)$-sandwiched. If not, observe that a recoil $\pi_i$ in $\pi$ is mapped to recoil $\pi_i + \mathbb{1}_{\pi_i > t}$ in $\psi_q(\pi)$ regardless of whether $\pi_i > t$ or $\pi_i < t$ or $\pi_i = t$, and this map produces all recoils in $\psi_q(\pi).$ If $\pi$ is $(a,b)$-sandwiched, then $\pi$ always contains the recoil $a + b$, and $\psi_q(\pi) = (a + b + 1, 1, 2,\dots, a + 1, \pi_{a+1}+1, \pi_{a+2} + 1,\dots, \pi_{n-b}+1, a + 2, a + 3\dots, a + b)$ necessarily contains $a+b$ as a recoil. Any other recoil in $\pi$ takes the form $\pi_i$ for $a+1 \le \pi \le n - b$ and is mapped to $\pi_i + 1.$ Now, the values $1, 2, \dots, a+b-1$ fail to be recoils in $\pi$ and the values $1, 2, \dots, a+b-1, a+b+1$ fail to be recoils in $\psi_q(\pi)$, so we have accounted for all possible recoils in $\pi$ and its image $\psi_q(\pi)$, and shown these recoils to be in bijection.

To see that $\psi_s$ preserves the number of recoils, observe that the smallest recoil in $\pi$ is $t$, and that a recoil $\pi_i$ in $\pi$ is mapped to $\pi_i + 1$ in $\psi_s(\pi).$ This map produces all recoils in $\psi_s(\pi).$ (The recoils in $\psi_s(\pi)$ all appear before the 1.)

To see that $\psi_r$ increases the number of recoils by one, observe that a recoil $\pi_i$ in $\pi$ is mapped to recoil $\pi_i + 1$ in $\psi_r(\pi)$, and this map produces all recoils in $\psi_r(\pi)$ except for the recoil 1.

\end{proof}

Now, making the observation that $\psi_{i,k,n} = \psi_s^{n-i-1} \circ \psi_r$, we can describe the set of words in $A_e = \{p,q,r,s\}$ that the set of evil-avoiding permtuations is in bijection with. This set was stated in Lemma \ref{lem:evil-regex}.

\begin{cor}
The words of $L_{\evil}$ encode evil-avoiding permutations uniquely.
\end{cor}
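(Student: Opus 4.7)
The plan is to deduce the corollary directly from Theorem \ref{thm:evil-factor} together with the definition of $L_{\evil}$. Theorem \ref{thm:evil-factor} asserts that every evil-avoiding permutation can be written uniquely as a composition of maps in $\{\psi_p, \psi_q, \psi_r, \psi_s\}$ applied to $e_0 \in S_0$. Identifying each such composition $\psi_{i_k} \circ \cdots \circ \psi_{i_1}$ with the word $i_k i_{k-1} \cdots i_1 \in A_e^*$, following the convention already used for rectangular permutations in which the outermost operator is written leftmost, this uniqueness yields a bijection between evil-avoiding permutations and finite composition words. Since each operator increases the size of its input by exactly one, a composition of length $n$ applied to $e_0$ produces a permutation in $S_n$, so words of positive length correspond precisely to evil-avoiding permutations of size at least one.

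It then suffices to observe that $L_{\evil}$ was \emph{defined} in Lemma \ref{lem:evil-regex} as the set of positive-length words encoding valid compositions applied to $e_0$. The bijection from Theorem \ref{thm:evil-factor} therefore restricts to a bijection between $L_{\evil}$ and the set of evil-avoiding permutations of positive length, which is the content of the corollary.

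No genuine obstacle remains at this step; the essential work was carried out in Theorem \ref{thm:evil-factor}, whose proof translated $\psi_p, \psi_q,$ and $\psi_{i,k,n}$ into the Kim--Williams partition-sequence operators $\Psi_1, \Psi_2, \Phi_{i,k,n}$ via the conjugation by $P \circ f$, and then invoked the disjoint decomposition $\ps(n,k) = \Psi_1(\ps(n-1,k)) \cup \Psi_2(\ps(n-1,k)) \cup \coprod_{i=k-1}^{n-1} \Phi_{i,k,n}(\ps(i,k-1))$. The corollary is essentially a repackaging of that theorem in the language of regular expressions, made possible by the identification $\psi_{i,k,n} = \psi_s^{n-i-1} \circ \psi_r$ and by the explicit regex characterization $((p|q)^*s^*r)^*s^+$ of $L_{\evil}$ already established in Lemma \ref{lem:evil-regex}.
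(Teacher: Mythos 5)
Your proposal is correct and takes essentially the same route as the paper: there the corollary is stated without a separate proof, as an immediate consequence of the unique-factorization statement of Theorem \ref{thm:evil-factor} (itself proved via the conjugation by $P \circ f$ and the Kim--Williams disjoint decomposition, as you recount), the identity $\psi_{i,k,n} = \psi_s^{n-i-1} \circ \psi_r$, and the characterization of $L_{\evil}$ in Lemma \ref{lem:evil-regex} as precisely the positive-length valid composition words applied to $e_0$. Nothing further is needed.
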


\subsection{Other consequences}

In a word of $L_{\rect}$ encoding a rectangular permutation, the number of recoils in the permutation equals the number of $d$'s. The bijection converts $d$'s to $r$'s, which count the recoils in the associated evil-avoiding permutation. Since the bijection preserves the number of recoils, the following consequence is immediate.

\begin{cor}\label{cor:bij}
There is an explicit bijection between rectangular permutations and evil-avoiding permutations in $S_n$ with $k$ recoils.
\end{cor}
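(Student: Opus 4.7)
The plan is to compose the three bijections already established in the paper and track the statistic at each step. First, by the corollary following Lemma \ref{lem:rect-recoil}, I have a bijection $\rect(n,k) \leftrightarrow \{w \in L_{\rect} : |w| = n,\ w \text{ has exactly } k \text{ letters equal to } d\}$, obtained by reading off the unique composition of operators in $\{\psi_1, \psi_2, \psi_u, \psi_d\}$ that produces the permutation from $e_0$. Since $\psi_1, \psi_2, \psi_u$ preserve the recoil count and $\psi_d$ increments it by one, the number of $d$'s in the word equals the number of recoils of the permutation.

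Next, I would invoke the length-preserving bijection $b: L_{\rect} \to L_{\evil}$ of Theorem \ref{thm:main}. The key observation is that $b$ sends the letter $d$ to the letter $r$ (and permutes the other letters among $\{p,q,s\}$), and the optional reversal step in the definition of $b$ reverses the prefix before the last $r$ but does not change which letters appear or how many there are of each. Consequently, the number of $d$'s in any word $w \in L_{\rect}$ equals the number of $r$'s in $b(w) \in L_{\evil}$.

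Finally, the corollary just before this one states that words of $L_{\evil}$ encode evil-avoiding permutations uniquely, providing a bijection $L_{\evil} \leftrightarrow \evil$ under which words of length $n$ correspond to permutations in $\evil(n)$. By Proposition \ref{prop:evil-recoil}, the operators $\psi_p, \psi_q, \psi_s$ preserve the number of recoils while $\psi_r$ increases it by one, so the number of $r$'s in the word equals the number of recoils of the associated evil-avoiding permutation.

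Composing these three bijections yields an explicit, length-preserving, recoil-preserving bijection $\rect(n,k) \to \evil(n,k)$: decode a rectangular permutation to its word in $L_{\rect}$, apply $b$, and decode the resulting word in $L_{\evil}$ to an evil-avoiding permutation. There is essentially no obstacle here, since each compositional piece has already been verified; the only thing to check is the bookkeeping that the three statistics (number of $d$'s, number of $r$'s, number of recoils on each side) line up, which I have indicated above. The corollary then follows immediately.
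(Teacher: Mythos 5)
Your proposal is correct and follows exactly the paper's argument: the paper likewise composes the encoding $\rect(n,k)\leftrightarrow\{w\in L_{\rect}:|w|=n,\ k\ d\text{'s}\}$ (via Lemma \ref{lem:rect-recoil} and its corollary), the letter-count-preserving bijection $b$ of Theorem \ref{thm:main}, and the decoding $L_{\evil}\leftrightarrow\evil$ with recoils counted by $r$'s (Proposition \ref{prop:evil-recoil}). The only difference is expository thoroughness — you spell out the bookkeeping that the paper treats as immediate.
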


Kim and Williams \cite{KW} proved a number of enumerative results about evil-avoiding permutations. The bijection between evil-avoiding permutations and $L_{\evil}$ allows us to provide some simpler proofs. 

Kim and Williams enumerated $\evil(n,k)$ using a recurrence and induction, but the structure of $L_{\evil}$ lets us enumerate this directly, which we can also do for $\rect(n,k)$.

\begin{theorem} \textup{\cite{KW} Corollary 3.15} For $k>0$,
$$ |\!\evil(n,k)| = \sum_{i=0}^{n-k-1} 2^i \binom{i+k-1}{k-1} \binom{n-i-1}{k}.$$
\end{theorem}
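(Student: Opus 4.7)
The plan is to invoke the bijection established earlier in the paper between evil-avoiding permutations and words in the regular language $L_{\evil}$, then count directly from the regular expression. By Proposition \ref{prop:evil-recoil}, the operator $\psi_r$ is the unique operator that increments the recoil count, so the evil-avoiding permutations in $\evil(n,k)$ correspond bijectively to words of length $n$ in $L_{\evil}$ containing exactly $k$ occurrences of the symbol $r$. This reduces the enumeration to a straightforward combinatorial count over a regular language.

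Next, I would parse each word of $L_{\evil}$ using the unambiguous decomposition guaranteed by the regular expression $((p|q)^{*}s^{*}r)^{*}s^{+}$. A word with exactly $k$ copies of $r$ splits uniquely into $k$ blocks of the form $(p|q)^{*}s^{*}r$ followed by a final block $s^{+}$. Index the blocks by $j = 1, \dots, k+1$, and let $x_j$ be the number of $(p|q)$-symbols and $y_j$ the number of $s$-symbols in the $j$th block, with the conventions $x_{k+1} = 0$ and $y_{k+1} \ge 1$, while $x_j, y_j \ge 0$ for $j \le k$. The total length condition becomes
\[
\sum_{j=1}^{k} x_j + \sum_{j=1}^{k+1} y_j + k = n.
\]

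Now I would set $i = \sum_{j=1}^{k} x_j$, the total number of $p$'s and $q$'s, and count contributions. Each of the $i$ positions in the $(p|q)$-slots is an independent binary choice, contributing a factor of $2^{i}$. The number of ways to distribute $i$ into $k$ nonnegative parts $(x_1, \dots, x_k)$ is $\binom{i+k-1}{k-1}$. It remains to count distributions of the $s$-symbols: after setting $y_{k+1}' = y_{k+1} - 1 \ge 0$, the $s$-symbols number $n - i - k - 1$ and are split into $k+1$ nonnegative parts $(y_1, \dots, y_k, y_{k+1}')$, yielding $\binom{(n-i-k-1)+k}{k} = \binom{n-i-1}{k}$ choices. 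Summing over $i$, and noting that the constraint $y_{k+1}' \ge 0$ forces $i \le n-k-1$, produces
\[
|\evil(n,k)| = \sum_{i=0}^{n-k-1} 2^{i} \binom{i+k-1}{k-1}\binom{n-i-1}{k},
\]
as desired.

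The only potentially delicate step is verifying that the regex decomposition is genuinely unique, so that I do not double-count. But this is immediate from Lemma \ref{lem:evil-regex}: the forbidden substrings $sp$ and $sq$ force a greedy parse (read from the left, gather the maximal $(p|q)^{*}s^{*}$ prefix before each $r$), making the block structure canonical. The rest is elementary stars-and-bars, so I do not expect any real obstacle beyond bookkeeping of indices.
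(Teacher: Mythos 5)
Your proposal is correct and follows essentially the same route as the paper's proof: both reduce via the bijection to counting words of length $n$ in $L_{\evil}$ with exactly $k$ occurrences of $r$, split at the $r$'s into $k$ blocks plus a terminal $s^+$ block, and obtain the factors $2^i$, $\binom{i+k-1}{k-1}$, and $\binom{n-i-1}{k}$ by the same stars-and-bars bookkeeping. Your explicit treatment of the variables $x_j, y_j$ and of parse uniqueness merely spells out steps the paper leaves implicit.
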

\begin{proof}
Permutations in $\evil(n,k)$ correspond to words in $L_{\evil}$ of size $n$ with $k$ $r$'s. The $r$'s split the word into $k+1$ pieces, where the last piece is all $s$'s. Let $i$ be the total number of $p$'s and $q$'s in the word.
These $i$ $p$'s and $q$'s can be distributed into the first $k$ pieces between the $r$'s in $\binom{i+k-1}{k-1}$ ways. The remaining $n-k-i$ positions must be $s$'s, and at least one $s$ must be in the last piece, so there are $n-k-i-1$ spares. 
We can allocate these among $k+1$ pieces in $\binom{n-i-1}{k}$ ways. For each such allocation of the letters to the $k+1$ pieces, the $s's$ must come at the end of the pieces, so the remaining choice is which of the $i$ $p|q$ positions are $p$'s and which are $q$'s. There are $2^i$ ways to make this choice. Thus, for each $i$, there are $2^i \binom{i+k-1}{k-1} \binom{n-i-1}{k}$ words in $\evil(n,k).$ Therefore, for $k>0,$
$$|\!\evil(n,k)| = \sum_i 2^i \binom{i+k-1}{k-1} \binom{n-i-1}{k}.$$
\end{proof}

Proposition \ref{prop:evil-disjoint} gives the following recurrence for $|\evil(n,k)|$ with $k>0:$

$$|\!\evil(n,k)| = 2 |\!\evil(n-1,k)| + \sum_{i=1} |\!\evil(n-i,k-1) |.  $$

There is a simpler recurrence, also mentioned in \cite{KW} in the proof of their Corollary 3.15.

\begin{theorem}
For $k>0$ and $n>1,$ 

$$|\!\evil(n,k)| = 3|\!\evil(n-1,k)|+|\!\evil(n-1,k-1)|-2|\!\evil(n-2,k)| .$$

\end{theorem}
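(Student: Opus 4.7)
The plan is to derive the three-term recurrence by eliminating the sum appearing in the recurrence that immediately precedes this theorem, namely
\[ |\!\evil(n,k)| = 2|\!\evil(n-1,k)| + \sum_{i \ge 1} |\!\evil(n-i,k-1)|, \]
which follows from the disjoint union decomposition of Proposition \ref{prop:evil-disjoint}. For brevity, write $e(n,k) := |\!\evil(n,k)|$ and $S(n,k) := \sum_{i \ge 1} e(n-i,k-1)$, so the above recurrence reads $e(n,k) = 2 e(n-1,k) + S(n,k)$.

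First I would record the same identity one step lower in $n$, giving $e(n-1,k) = 2 e(n-2,k) + S(n-1,k)$. Next, I would observe the telescoping identity
\[ S(n,k) - S(n-1,k) = \sum_{i \ge 1} e(n-i,k-1) - \sum_{i \ge 1} e(n-1-i,k-1) = e(n-1,k-1), \]
since after reindexing the second sum via $j = i+1$, all terms cancel except the term with $j=1$ (equivalently $i = n-1$) in the first sum. One has to check that this cancellation is valid at the boundary of the index range (for instance at $k=1$ or when $n$ is small compared to $k$), but the extra terms that would appear are of the form $e(m,k-1)$ with $m<k-1$, which vanish, so the identity holds whenever $n>1$ and $k>0$.

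Finally, I would subtract the two instances of the recurrence to eliminate the sums:
\[ e(n,k) - e(n-1,k) = 2e(n-1,k) - 2e(n-2,k) + \bigl(S(n,k) - S(n-1,k)\bigr), \]
and substituting the telescoping identity yields
\[ e(n,k) = 3e(n-1,k) + e(n-1,k-1) - 2e(n-2,k), \]
which is the claim. The main (and only real) obstacle is the bookkeeping at the lower end of the index range of the sum $S(n,k)$, so that the telescoping truly produces exactly $e(n-1,k-1)$; beyond that, the argument is a one-line algebraic manipulation.
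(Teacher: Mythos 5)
Your proof is correct, but it follows a genuinely different route from the paper's: you carry out the algebraic elimination that the paper explicitly mentions and then declines in its first sentence (``This follows algebraically from the previous recurrence, but it also admits a relatively short combinatorial proof''). The paper's actual proof is combinatorial: using Propositions \ref{prop:evil-disjoint} and \ref{prop:evil-recoil}, it writes $\evil(n,k)$ as the disjoint union of the images of $\psi_p$, $\psi_q$, $\psi_s$ applied to (the relevant parts of) $\evil(n-1,k)$ and of $\psi_r$ applied to $\evil(n-1,k-1)$; the operators $\psi_p,\psi_q$ contribute $2|\!\evil(n-1,k)|$, $\psi_r$ contributes $|\!\evil(n-1,k-1)|$, and the domain of $\psi_s$ inside $\evil(n-1,k)$ --- permutations ending in a suffix $e_t$ --- is precisely the complement of the images of $\psi_p$ and $\psi_q$, contributing $|\!\evil(n-1,k)|-2|\!\evil(n-2,k)|$. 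That argument explains bijectively where the coefficient $-2$ comes from, whereas your telescoping produces it mechanically; in exchange, your route needs no combinatorial input beyond the full-history recurrence and is the standard way to compress such a recurrence to finite depth. One bookkeeping correction: the one boundary term that could threaten your argument at $k=1$ is $|\!\evil(0,0)|=1$ (the empty permutation has zero recoils), which has $m=k-1$, not $m<k-1$ as you assert, so it does \emph{not} vanish; it is excluded for a different reason, namely that $\psi_r$ is undefined on $e_0$, so the sum genuinely runs over $1\le i\le n-1$ only. With that reading, $S(n,k)-S(n-1,k)=|\!\evil(n-1,k-1)|$ exactly as you claim (also, your parenthetical ``equivalently $i=n-1$'' should say that the surviving term is $i=1$, i.e., the term $|\!\evil(n-1,k-1)|$), and the rest of the manipulation is sound.
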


\begin{proof}
This follows algebraically from the previous recurrence, but it also admits a relatively short combinatorial proof.

By Propositions \ref{prop:evil-disjoint} and \ref{prop:evil-recoil}, $\evil(n,k)$ is the disjoint union of the images of $\psi_p,\psi_q,$ and $\psi_s$ on the intersections of their domains with $\evil(n-1,k)$ together with the image of $\psi_r$ on $\evil(n-1,k-1).$

The operators $\psi_p$ and $\psi_q$ are defined on all of $\evil(n-1,k)$, and contribute ${2|\!\evil(n-1,k)|}$ permutations.

The operator $\psi_r$ is defined on all of $\evil(n-1,k-1)$ and contribute ${|\!\evil(n-1,k-1)|}$ permutations.

The operator $\psi_s$ is only defined on permutations that end with a copy of $e_t$ for some $t \ge 1$. The intersection with $\evil(n-1,k)$ are those permutations that are not in $\psi_p(\evil(n-2,k))$ or $\psi_q(\evil(n-2,k)).$ This contributes $|\!\evil(n-1,k)|-2|\!\evil(n-2,k)|$ permutations.

This decomposition of $\evil(n,k)$ combinatorially proves 

\begin{align*}
|\!\evil(n,k)| &= 2|\!\evil(n-1,k)|+|\!\evil(n-1,k-1)| + (|\!\evil(n-1,k)|-2|\!\evil(n-2,k)|)\\ 
&=3|\!\evil(n-1,k)|+|\!\evil(n-1,k-1)|-2|\!\evil(n-2,k)|.
\end{align*}

\end{proof}

\section{Table of examples}\label{sec:examples}
We give examples of the bijection between rectangular and evil-avoiding permutations below. 

\begin{longtable}[c]{| c | c | c | c |} 
 \caption{The bijection for all rectangular permutations in $S_1,S_2,S_3,S_4$ and three randomly selected permutations in $S_5$.}\label{table:ex}\\

 \hline
 Rect. perm. & Word in $L_{\rect}$ & Word in $L_{\evil}$ & Evil-av. perm.\\
 \hline
 \endfirsthead

 \hline
 Rect. perm. & Word in $L_{\rect}$ & Word in $L_{\evil}$ & Evil-av. perm.\\
 \hline
 \endhead

 \hline
 \endfoot

 \hline

 \endlastfoot

 $1$ & $1$ & $s$ & $1$ \\
 $12$ & $11$ & $ss$ & $12$ \\
 $21$ & $d1$ & $rs$ & $21$ \\
 $123$ & $111$ & $sss$ & $123$ \\
 $132$ & $1d1$ & $srs$ & $312$ \\
 $213$ & $d11$ & $rss$ & $231$ \\
 $231$ & $ud1$ & $qrs$ & $213$ \\
 $312$ & $2d1$ & $prs$ & $132$ \\
 $321$ & $dd1$ & $rrs$ & $321$ \\
 $1234$ & $1111$ & $ssss$ & $1234$ \\
 $1243$ & $11d1$ & $ssrs$ & $4123$ \\
 $1324$ & $1d11$ & $srss$ & $3412$ \\
 $1342$ & $1ud1$ & $qsrs$ & $3142$ \\
 $1423$ & $12d1$ & $psrs$ & $1423$ \\
 $1432$ & $1dd1$ & $rsrs$ & $4231$ \\
 $2134$ & $d111$ & $rsss$ & $2341$ \\
 $2143$ & $d1d1$ & $srrs$ & $4312$ \\
 $2314$ & $ud11$ & $qrss$ & $2134$ \\
 $2341$ & $uud1$ & $qqrs$ & $2314$ \\
 $3124$ & $2d11$ & $prss$ & $1342$ \\
 $3142$ & $2ud1$ & $qprs$ & $3124$ \\
 $3214$ & $dd11$ & $rrss$ & $3421$ \\
 $3241$ & $dud1$ & $qrrs$ & $2143$ \\
 $3412$ & $u2d1$ & $pqrs$ & $1324$ \\
 $3421$ & $udd1$ & $rqrs$ & $3241$ \\
 $4123$ & $22d1$ & $pprs$ & $1243$ \\
 $4132$ & $2dd1$ & $rprs$ & $2431$ \\
 $4312$ & $d2d1$ & $prrs$ & $1342$ \\
 $4321$ & $ddd1$ & $rrrs$ & $4321$ \\
 $15423$ & $1d2d1$ & $prsrs$ & $15342$ \\
 $43125$ & $d2d11$ & $prrss$ & $14532$ \\
 $51432$ & $2ddd1$ & $rrprs$ & $35421$ 
 \end{longtable}

\section{The 1-almost-increasing permutations}\label{sec:bael}

We consider a class of permutations studied by Knuth \cite{Knu} and Elizalde \cite{Eli}. Kim and Williams \cite{KW} mentioned these permutations as Wilf-equivalent to evil-avoiding permutations and stated it would be particularly interesting to find a length-preserving bijection.

\begin{defin}
\textup{\cite{Eli}} Let $\An$ be the permutations $\pi \in S_n$ so for every $i \in \{1, 2, \dots, n\}$ there is at most one $j \le i$ with $\pi_j > i.$ Equivalently, $\pi$ avoids $4321, 4312, 3421,$ and $3412$. These permutations $\pi$ are called \emph{1-almost-increasing.} \footnote{We suggest the name \emph{bael-avoiding} because if we replace 4 with B, 3 with A, 2 with E, and 1 with L, then the avoided patterns correspond to BAEL, BALE, ABEL, and ABLE. Bael is a name for a demon and for a type of tree native to the Indian subcontinent.}
\end{defin}

This class of permutations is motivated by sorting algorithms, but it also arises as a property of products of pattern-avoiding permutations.

\begin{prop}
If $\tau_1$ and $\tau_2$ avoid 132 and 123, then $\tau_1 \circ \tau_2$ is 1-almost-increasing.
\end{prop}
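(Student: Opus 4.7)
The plan is to exploit a structural characterization of $\{123,132\}$-avoiding permutations and then translate the 1-almost-increasing condition into a prefix-intersection inequality. First I would establish the following structural lemma: a permutation $\sigma\in S_n$ avoids both 123 and 132 if and only if, for every $m\in\{1,\dots,n\}$, the prefix image $\{\sigma(1),\dots,\sigma(m)\}$ has the form $\{n-m,n-m+1,\dots,n\}\setminus\{v\}$ for some $v\in\{n-m,n-m+1,\dots,n-1\}$ (allowing $v=n-m$ to cover the case $\{n-m+1,\dots,n\}$). The proof goes through the equivalent reformulation ``for every pair $i<j$ with $\sigma_i<\sigma_j$, every $k>j$ satisfies $\sigma_k<\sigma_i$,'' which is immediate from the definitions of 123 and 132; inducting on the position of $n$ then forces the one-line notation of $\sigma$ to be a concatenation of blocks of the form $(v-1,v-2,\dots,v-m+1,v)$, from which the prefix characterization drops out.

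Applying the lemma to $\tau_1$ and reading off which positions of $\tau_1$ carry values exceeding $m$, I would deduce that the set $T_m:=\tau_1^{-1}(\{m+1,\dots,n\})$ has the form $\{1,2,\dots,n-m-1\}\cup\{e\}$ for some $e\ge n-m$. The outlier $e$ equals $n-m$ when $n-m$ lies on a block boundary of $\tau_1$, and is the right endpoint of the block containing position $n-m$ otherwise. In parallel, the lemma applied to $\tau_2$ gives that $S_m:=\{\tau_2(1),\dots,\tau_2(m)\}$ has the form $\{n-m,\dots,n\}\setminus\{v'\}$ for some $v'\in\{n-m,\dots,n-1\}$.

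The last step is to use the original definition of 1-almost-increasing: $\tau:=\tau_1\circ\tau_2$ is 1-almost-increasing iff for every $m$ at most one of $\tau(1),\dots,\tau(m)$ exceeds $m$. Since $\tau(i)>m$ is equivalent to $\tau_2(i)\in T_m$, this count equals $|S_m\cap T_m|$. But $S_m\subseteq\{n-m,\dots,n\}$ forces $S_m$ to be disjoint from $T_m\setminus\{e\}\subseteq\{1,\dots,n-m-1\}$, and hence $S_m\cap T_m\subseteq\{e\}$, giving $|S_m\cap T_m|\le 1$. The main obstacle is the clean formulation and proof of the structural lemma; once that is in hand, the remainder of the argument is a one-line computation.
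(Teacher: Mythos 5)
Your argument is correct in substance but follows a genuinely different route from the paper. The paper never touches the structure of $\{123,132\}$-avoiders: it verifies the claim in $S_4$ by inspection and then reduces the general case via a restriction argument (an occurrence of a forbidden pattern in $\tau_1\circ\tau_2$ at positions $\{a,b,c,d\}$ descends to the product of the restriction $\tau_2'$ of $\tau_2$ to those positions with the restriction $\tau_1'$ of $\tau_1$ to $\tau_2(\{a,b,c,d\})$, and restrictions of $\{123,132\}$-avoiders still avoid $123$ and $132$). That proof is shorter and exploits only the closure of avoidance classes under taking subsequences, but it relies on the pattern characterization $\{4321,4312,3421,3412\}$ of the 1-almost-increasing permutations. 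Your proof instead works directly with the defining property ``at most one $j\le i$ with $\pi_j>i$,'' and it buys structural information the paper's argument does not provide: it identifies the unique position $e$ of $\tau_1$ that can carry a large value among the first $m$ positions of $\tau_1\circ\tau_2$, and it never needs the equivalence between the two definitions of 1-almost-increasing.

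One slip, though it turns out to be harmless: your structural lemma is false as stated. Restricting the missing value to $v\in\{n-m,\dots,n-1\}$ forces $n\in\{\sigma(1),\dots,\sigma(m)\}$ for every $m\ge 1$, i.e.\ $\sigma_1=n$; but $\sigma=213$ avoids both $123$ and $132$ and has prefix image $\{2\}=\{2,3\}\setminus\{3\}$ at $m=1$, so $v=n$ must be allowed. The reason this does not sink the proof is that the only consequences you actually use are the containment $S_m\subseteq\{n-m,\dots,n\}$ and the description $T_m=\{1,\dots,n-m-1\}\cup\{e\}$ with $e\ge n-m$, and both follow from the correct one-line inequality $\sigma_i\ge n-i$ for all $i$: by your own reformulation, each $\sigma_i$ has at most one larger value to its right and at most $i-1$ to its left, so $n-\sigma_i\le i$. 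With that inequality, the first $n-m-1$ positions of $\tau_1$ carry values exceeding $m$, exactly one further position does (since $|T_m|=n-m$), and the final count $|S_m\cap T_m|\le 1$ goes through as you wrote it. So you should either widen the range of $v$ to $\{n-m,\dots,n\}$ or, better, discard the block decomposition and the exact location of the missing value altogether and cite only $\sigma_i\ge n-i$; the rest of your argument, including the identity $|\{i\le m:\tau(i)>m\}|=|S_m\cap T_m|$ via injectivity of $\tau_2$, is sound.
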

\begin{proof}
By inspection, this is true in $S_4$: The permutations in $S_4$ avoiding 132 and 123 are \newline $\{3214, 3241, 3412, 3421, 4213, 4231, 4312, 4321 \}.$ The products of two such permutations cover all permutations in $S_4$ except $4321, 4312, 3421,$ and $3412.$ 

Any pattern of $\tau_1 \circ \tau_2$ in positions $\{a,b,c,d\}$ would produce that pattern in the size $4$ permutation $\tau_1' \circ \tau_2'$, where $\tau_2'$ is the restriction of $\tau_2$ to $\{a,b,c,d\}$ and $\tau_1'$ is the restriction of $\tau_1$ to $\tau_2(\{a,b,c,d\})$. 

Since the reduction of $\tau_1' \circ \tau_2'$ cannot be in $\{4321, 4312, 3421, 3412\},$ $\tau_1 \circ \tau_2$ avoids these patterns. 
\end{proof}

These types of product constructions are studied more by the author in \cite{Tung}.

\begin{theorem}\label{thm:bael-equivalent}
The 1-almost-increasing permutations are Wilf-equivalent to evil-avoiding permutations. That is, for all $n$, $|\An|=|\!\evil(n)|.$
\end{theorem}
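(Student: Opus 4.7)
The proof strategy is to adapt the encoding approach of Sections \ref{sec:the-bij}--\ref{sec:regex-evil} to 1-almost-increasing permutations. I will define a set of size-increasing operators on $\bigcup_n \An$ under which every 1-almost-increasing permutation of positive length is uniquely expressible as a composition applied to $e_0$. These compositions will be encoded as words in a regular language $L_{\bael}$, and either a length-preserving bijection $L_{\bael} \to L_{\evil}$ or matching regex counts will establish the Wilf-equivalence.

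First I would identify the operators. Natural candidates are $\rho_{1,1}$ (prepend $1$) and $\rho_{1,2}$ (insert $1$ at position $2$), mirroring $\psi_1$ and $\psi_2$ from the rectangular case, together with two operators that insert the new maximum $n+1$ at or near the end, such as $\rho_{n+1,n+1}$ and $\rho_{n+1,n}$. Preservation of the 1-almost-increasing condition under each follows from a direct prefix-by-prefix check at the insertion index; each operator alters the count $|\{j \le i : \sigma_j > i\}|$ by at most one. The central step is a unique-factorization lemma analogous to Lemma \ref{lem:rect-uniquely-expressible}, proved by casework on $\pi_1$ and the position of $n$: the condition at $i = n-1$ forces $n$ to sit at position $n-1$ or $n$, and for $\pi_1 = K > 1$ the entries $\pi_2, \dots, \pi_{K-1}$ are forced to form a specific $(K-2)$-subset of $\{1, \dots, K-1\}$. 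Domain restrictions analogous to those of $\psi_2$ and $\psi_u$ would then disambiguate image overlaps, yielding a regex for $L_{\bael}$ directly from the restrictions.

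Once the regex is extracted, I would either match it with the regex $((p|q)^* s^* r)^* s^+$ of $L_{\evil}$ by relabeling (giving an explicit length-preserving bijection in the style of Theorem \ref{thm:main}), or show both languages' length-$n$ word counts satisfy the same recurrence as $|\evil(n)|$ with matching initial conditions. The main obstacle is the design of the operators themselves. Unlike rectangular permutations, where the first two values must be adjacent or the first value must equal $1$, a 1-almost-increasing permutation can begin with an arbitrarily large value; for instance $(5, 2, 1, 3, 4) \in \mathcal{A}^{(1)}_5$ lies outside the images of the four operators listed above. It will likely be necessary to augment the scheme with a ``prepend $n+1$'' operator $\rho_{n+1, 1}$ whose domain is restricted to permutations $\pi$ satisfying $\pi_j \le j + 1$ for all $j$, and to carefully balance the domain restrictions of all operators so that the total count matches $|\evil(n)|$. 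Verifying that the resulting regex has the correct language size is the technical heart of the argument.
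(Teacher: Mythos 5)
Your high-level framework (encode $\An$ by size-increasing operators, extract a regular language, and match it against $L_{\rect}$ or $L_{\evil}$) is exactly the paper's strategy, but your concrete operator design contains false claims, and the repair you sketch is left unproven at precisely the point you yourself call the technical heart. First, the claim that ``the condition at $i = n-1$ forces $n$ to sit at position $n-1$ or $n$'' is wrong: taking $i = n-1$ in the defining condition only says that at most one $j \le n-1$ has $\pi_j > n-1$, which is vacuous since the value $n$ is the unique value exceeding $n-1$. Your own example $(5,2,1,3,4) \in \mathcal{A}^{(1)}_5$ already refutes the claim, and in fact the maximum can sit in the first position whenever the remaining entries satisfy $\pi_j \le j$ appropriately. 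Second, the assertion that $\pi_1 = K > 1$ forces $\pi_2,\dots,\pi_{K-1}$ to be a \emph{specific} $(K-2)$-subset of $\{1,\dots,K-1\}$ is also false: what is forced is only the inequality $\pi_j \le j$ for $2 \le j \le K-1$, and e.g.\ both $(4,1,2,3,5)$ and $(4,2,3,1,5)$ are 1-almost-increasing with $\pi_1 = 4$ but different value sets in positions $2,3$. Consequently your proposed four operators (insert $1$ at positions $1,2$; insert the new maximum at the last two positions) do not cover $\Ao$, and the patched five-operator scheme with a restricted ``prepend $n+1$'' is never shown to give unique factorization or the right counts --- so no proof is actually produced.

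For comparison, the paper's resolution is cleaner and avoids the maximum entirely: it shows $\Ao = \rho_{1,1}(\An) \cup \rho_{1,2}(\An) \cup \rho_{2,1}(\An) \cup \rho_{2,2}(\An)$, i.e.\ one inserts the \emph{small} values $1$ or $2$ at one of the first two positions. The covering step is a two-line pattern argument: if $\pi_1 \ge 3$ and $\pi_2 \ge 3$, then positions $1, 2, \pi^{-1}_1, \pi^{-1}_2$ realize one of $4321, 4312, 3421, 3412$. Uniqueness is obtained by forbidding the substrings $\rho_{2,1}\rho_{1,1}$ and $\rho_{2,2}\rho_{1,1}$ (the only source of ambiguity is $\{\pi_1,\pi_2\} = \{1,2\}$), after which the resulting language is isomorphic to $L_{\rect}$ via $\rho_{1,1} \mapsto 1$, $\rho_{2,1} \mapsto 2$, $\rho_{2,2} \mapsto u$, $\rho_{1,2} \mapsto d$, giving the Wilf-equivalence through Theorem \ref{thm:main}. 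If you want to salvage your draft, replace your maximum-insertion operators with these four small-value insertions and prove the covering and uniqueness lemmas above; your counting fallback via the regex of Lemma \ref{lem:rect-regex} then becomes unnecessary.
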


These collections of patterns are not trivially Wilf-equivalent by a dihedral symmetry. We prove Wilf-equivalence between 1-almost-increasing and rectangular permutations with a bijection using the regular language $L_{\rect}$.

\begin{theorem}
For $n\ge 1,$ $$\Ao = \rho_{1,1}(\An) \cup \rho_{1,2}(\An) \cup \rho_{2,1}(\An) \cup \rho_{2,2}(\An).$$

\end{theorem}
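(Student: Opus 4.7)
My plan is to prove the two set inclusions. The inclusion $\supseteq$ amounts to showing each operator $\rho_{i,j}$ for $(i,j)\in\{(1,1),(1,2),(2,1),(2,2)\}$ maps $\An$ into $\Ao$; the inclusion $\subseteq$ amounts to showing every $\sigma\in\Ao$ has a preimage in $\An$ under at least one of these four operators.

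For the $\supseteq$ direction, I would give a uniform argument. In every case the inserted value is either $1$ or $2$, so in the resulting permutation it is one of the two smallest entries, and its position is $1$ or $2$. Consequently, in any $4$-element subsequence containing the inserted entry, that entry occupies position $1$ or position $2$ and has rank at most $2$. But in each of the forbidden patterns $4321,4312,3421,3412$, the entries at positions $1$ and $2$ both have rank at least $3$. So no forbidden pattern in the image can use the inserted entry. Any forbidden pattern in $\rho_{i,j}(\pi)$ must therefore lie entirely on the remaining entries, which (after removing the inserted entry and shifting values back down) gives the same forbidden pattern in $\pi$, contradicting $\pi\in\An$.

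For the $\subseteq$ direction, the key step is to observe that the characterization of $\Ao$ (at most one $j\le i$ with $\sigma_j>i$), applied at $i=2$, implies that at least one of $\sigma_1,\sigma_2$ lies in $\{1,2\}$. This gives the four (not necessarily disjoint) cases $\sigma_1=1$, $\sigma_1=2$, $\sigma_2=1$, $\sigma_2=2$, and in each case $\sigma$ is in the image of the corresponding $\rho_{i,j}$. For example, if $\sigma_1=2$, the preimage under $\rho_{2,1}$ is obtained by deleting $\sigma_1$ and subtracting $1$ from every remaining entry that is $\ge 2$. The preimage lies in $\An$ because deleting a single entry from a pattern-avoiding permutation never introduces a new occurrence of any pattern.

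The only real content beyond routine unwinding is the rank–position observation in the $\supseteq$ direction and the application of the $i=2$ constraint in the $\subseteq$ direction; both are short, so I do not anticipate a substantial obstacle. The main thing to be careful about is verifying in each of the four cases that the inverse of $\rho_{i,j}$ is well-defined on $\sigma$ (i.e., that $\sigma$ really has the required shape), and that the resulting shortened permutation is a valid element of $S_n$, which follows immediately from the explicit description of $\rho_{i,j}$.
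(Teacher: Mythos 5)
Your proposal is correct and follows essentially the same route as the paper: both inclusions are proved separately, with $\subseteq$ resting on the observation that some entry among $\sigma_1,\sigma_2$ must lie in $\{1,2\}$ (you via the ``at most one $j\le i$ with $\sigma_j>i$'' characterization at $i=2$, the paper by directly exhibiting a forbidden pattern on positions $\{1,2,\sigma^{-1}_1,\sigma^{-1}_2\}$ otherwise), and $\supseteq$ resting on the fact that none of $4321,4312,3421,3412$ places a value of rank at most $2$ in its first two positions. Your write-up merely makes explicit two points the paper leaves implicit (the rank--position argument for the inserted entry, and that deleting an entry cannot create a pattern occurrence), which is fine.
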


\begin{proof}
We prove containment both ways. 

First, every element $\pi$ of $\Ao$ must have $\pi_1 \in \{1,2\}$ or $\pi_2 \in \{1,2\}$, since otherwise the values $\pi_1$ and $\pi_2$ are greater than $1$ and $2$, so one of the forbidden patterns appears in positions $\{1,2,\pi^{-1}_1,\pi^{-1}_2\}.$ Thus, $$\Ao \subset \rho_{1,1}(\An) \cup \rho_{1,2}(\An) \cup \rho_{2,1}(\An) \cup \rho_{2,2}(\An).$$

Second, none of the forbidden patterns has a 1 or 2 in the first two positions. Thus, inserting a 1 or 2 in positions 1 or 2 in $\pi \in \An$ cannot create a forbidden $4321, 4312, 3421,$ or $3412$. Thus, $$\rho_{1,1}(\An) \cup \rho_{12}(\An) \cup \rho_{2,1}(\An) \cup \rho_{2,2}(\An) \subset \Ao.$$
\end{proof}

Let the permutation in $S_1$ be encoded as $\rho_{1,1}$. 

\begin{theorem}
Every 1-almost-increasing permutation of size $n \ge 1$ can be written uniquely as a string of size $n$ in $\rho_{1,1}, \rho_{1,2}, \rho_{2,1}, \rho_{2,2}$ ending in $\rho_{1,1}$ with no substrings $\rho_{2,1}\rho_{1,1}$ or $\rho_{2,2}\rho_{1,1}.$ 
\end{theorem}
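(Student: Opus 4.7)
The plan is to proceed by induction on $n$, using the preceding theorem (which states $\Ao = \rho_{1,1}(\An) \cup \rho_{1,2}(\An) \cup \rho_{2,1}(\An) \cup \rho_{2,2}(\An)$) for existence and a case analysis on $(\pi_1, \pi_2)$ for uniqueness. The base case $n=1$ is immediate: the only 1-almost-increasing permutation in $S_1$ is $[1]$, which is encoded by the single-character string $\rho_{1,1}$ and trivially satisfies all the stated conditions.

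For the inductive step, I would observe that the first two entries of $\rho_{i,j}(\sigma)$ are determined by $(i,j)$: $\rho_{1,1}(\sigma)$ has $\pi_1 = 1$; $\rho_{1,2}(\sigma)$ has $\pi_2 = 1$; $\rho_{2,1}(\sigma)$ has $\pi_1 = 2$; and $\rho_{2,2}(\sigma)$ has $\pi_2 = 2$. Given $\pi \in \Ao$, the preceding theorem guarantees at least one of these four conditions holds, so $(\pi_1, \pi_2)$ already restricts the possibilities for the outermost (leftmost in the string) operator. For $(\pi_1, \pi_2) \notin \{(1,2), (2,1)\}$ the choice is unique; in the two remaining pairs, two operators are candidates, but a direct computation shows $\rho_{1,1}(\sigma) = \rho_{2,2}(\sigma)$ and $\rho_{1,2}(\sigma) = \rho_{2,1}(\sigma)$ whenever $\sigma_1 = 1$. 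In these ambiguous situations both candidates produce the same preimage $\sigma$, which necessarily has $\sigma_1 = 1$.

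To resolve the ambiguity, I would strengthen the inductive hypothesis to include the invariant that the leftmost character of the encoding of any $\sigma \in \An$ equals $\rho_{1,1}$ if and only if $\sigma_1 = 1$ (this is visible directly from the case analysis and is maintained throughout the induction). Then prepending $\rho_{2,2}$ (respectively $\rho_{2,1}$) to an encoding of a $\sigma$ with $\sigma_1 = 1$ would produce the forbidden substring $\rho_{2,2}\rho_{1,1}$ (respectively $\rho_{2,1}\rho_{1,1}$), so the outermost operator is forced to be $\rho_{1,1}$ (respectively $\rho_{1,2}$). In the unambiguous cases where the prepended operator is $\rho_{2,1}$ or $\rho_{2,2}$, one checks that the computed preimage satisfies $\sigma_1 > 1$ (since $\pi_2 > 2$ or $\pi_1 > 2$, respectively), so the encoding of $\sigma$ does not begin with $\rho_{1,1}$ and no forbidden substring is introduced at the join. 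The preimage $\sigma$ is uniquely determined by $\pi$ and the chosen $(i,j)$, and lies in $\An$ by the preceding theorem; by induction it has a unique encoding, which when prepended by the chosen operator yields the unique valid encoding of $\pi$. Since the base case and every inductive step preserve the property that the rightmost character is $\rho_{1,1}$, the resulting encoding ends in $\rho_{1,1}$ as required.

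The main obstacle I anticipate is carefully handling the ambiguous cases $(\pi_1, \pi_2) \in \{(1,2), (2,1)\}$. The uniqueness there depends on linking the first entry of $\sigma$ to the leftmost character of its encoding, which is exactly the role of the forbidden-substring restrictions $\rho_{2,1}\rho_{1,1}$ and $\rho_{2,2}\rho_{1,1}$; everything else is routine bookkeeping with the explicit formulas for the four insertion operators.
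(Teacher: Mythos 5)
Your proof is correct and takes essentially the same approach as the paper: the paper likewise gets existence from the preceding union theorem, isolates the same two ambiguous cases $(\pi_1,\pi_2)\in\{(1,2),(2,1)\}$ via the coincidences $\rho_{1,1}(\sigma)=\rho_{2,2}(\sigma)$ and $\rho_{1,2}(\sigma)=\rho_{2,1}(\sigma)$ when $\sigma_1=1$, and uses the forbidden substrings to force the leftmost operator, phrasing this as a shortest-counterexample argument rather than your induction on $n$. Your strengthened invariant (the leading character is $\rho_{1,1}$ exactly when $\sigma_1=1$) is just a more explicit packaging of the step the paper compresses into ``all encodings for $\pi$ must start with $\rho_{1,1}$.''
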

\begin{proof}
By the previous theorem, every 1-almost-increasing permutation can be written as a string of lengthening operators in $\{\rho_{1,1},\rho_{1,2},\rho_{2,1},\rho_{2,2}\}$. There are redundant encodings. If $\pi_1=1$, then $\rho_{1,1}(\pi) = \rho_{2,2}(\pi)$ and $\rho_{1,2}(\pi) = \rho_{2,1}(\pi)$, which imply $\{\pi_1,\pi_2\}=\{1,2\}$. Otherwise, $\pi \in \Ao$ is in the image of precisely one of $\rho_{1,1}, \rho_{1,2}, \rho_{2,1},$ and $\rho_{2,2}.$ 

Suppose encodings without $\rho_{2,2}\rho_{1,1}$ or $\rho_{2,1}\rho_{1,1}$ are not unique, and that $\pi$ is a shortest example with two encodings. If $\pi$ were in the image of just one of $\{\rho_{1,1},\rho_{1,2},\rho_{2,1},\rho_{2,2}\}$, then $\pi = \rho_{i,j} \tau$ for some shorter $\tau$ that also doesn't have a unique encoding, contradicting that $\pi$ is a shortest example. 

If $\pi_1=2$ and $\pi_2=1$, then there is a permutation $\tau$ so that $\pi = \rho_{2,1}\rho_{1,1}\tau = \rho_{1,2}\rho_{1,1}\tau.$ Since $\rho_{1,1}\tau$ is shorter than $\pi$, it must have a unique encoding $\rho_{1,1}w$, and since $\rho_{2,1}\rho_{1,1}$ is forbidden, $\pi$ has a unique encoding in this language, namely $\rho_{1,2}\rho_{1,1}w$.

If $\pi_1=1$ and $\pi_2=2$, then $\pi$ starts with an increasing consecutive sequence $[1~2~\cdots~(t-1)~t~\cdots]$. Then since $\rho_{2,2}\rho_{1,1}$ is forbidden, all encodings for $\pi$ must start with $\rho_{1,1},$ so if $\pi = \rho_{1,1}\tau$, there must be multiple encodings for $\tau$ which is shorter than $\pi,$ a contradiction.

Therefore, encodings without $\rho_{2,2}\rho_{1,1}$ or $\rho_{2,1}\rho_{1,1}$ are unique.
\end{proof}

Let $L_{\Ai}$ be this language over the alphabet $\{\rho_{1,1},\rho_{1,2}, \rho_{2,1}, \rho_{2,2}\}$. The languages $L_{\Ai}$ and $L_{\rect}$ are isomorphic by the length-preserving substitution $\rho_{1,1} \mapsto 1, \rho_{2,1} \mapsto 2, \rho_{2,2} \mapsto u, \rho_{1,2} \mapsto d.$

Thus, 1-almost-increasing permutations and evil-avoiding permutations are Wilf-equivalent, and Theorem \ref{thm:bael-equivalent} is proven.

\section{Paths}\label{sec:paths}

There are several other families of combinatorial objects counted by the same sequence A006012 \cite{OEIS}. One is walks of length $2n-2$ starting and ending in the middle vertex of $P_7$, a path graph with $7$ vertices. Let the vertices of $P_7$ be $\{v_1,v_2,v_3,v_4,v_5,v_6,v_7\}$.

\begin{theorem}
There is an explicit bijection between walks of length $2n-2$ in $P_7$ starting and ending at $v_4$ and rectangular permutations of length $n$.
\end{theorem}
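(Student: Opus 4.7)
The plan is to route the bijection through the language $L_{\rect}$: I will first construct a length-preserving bijection between walks of length $2n-2$ in $P_7$ from $v_4$ to $v_4$ and words of length $n$ in $L_{\rect}$, and then compose with Lemma~\ref{lem:rect-regex} which bijects $L_{\rect}$ with rectangular permutations. Since each walk has $2n-2$ steps and each word has $n$ letters, the natural thing to do is to read the walk two steps at a time, producing $n-1$ letters, and then append the terminal $1$ that is always present at the end of a word in $L_{\rect}$.

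Writing $L$ for a step from $v_i$ to $v_{i-1}$ and $R$ for a step from $v_i$ to $v_{i+1}$, after each pair of steps the walk sits at a vertex at even distance from $v_4$, hence at $v_2$, $v_4$, or $v_6$. From $v_4$ all four pairs $LL, LR, RL, RR$ stay in $P_7$, while at $v_6$ the pair $RR$ would exit (reaching a nonexistent $v_8$) and at $v_2$ the pair $LL$ would exit, leaving only $3$ legal pairs at each of $v_2, v_6$. I plan to define the bijection by the following table, where rows are the current vertex and columns are the pair of steps:
\begin{center}
\begin{tabular}{c|cccc}
 & $LR$ & $RL$ & $LL$ & $RR$\\ \hline
$v_4$ & $1$ & $d$ & $2$ & $u$\\
$v_6$ & $2$ & $u$ & $d$ & ---\\
$v_2$ & $2$ & $u$ & --- & $d$
\end{tabular}
\end{center}
In the reverse direction I will read the word left to right, consult the table at the current vertex to obtain each pair, and discard the terminal $1$ (for which no pair is produced).

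The routine verifications I will carry out are: (i) the walk obtained from a word stays inside $P_7$, which is automatic since illegal pairs are omitted from the table; (ii) the walk returns to $v_4$; and (iii) the word obtained from a walk ends in $1$ and avoids the forbidden substrings $21$ and $u1$ of $L_{\rect}$. The heart of the matter, and the main obstacle, is to see that the forbidden-substring condition on $L_{\rect}$ matches the local rule ``the letter $1$ is produced only when the walk is at $v_4$.'' Tracing the table, letters $2$ and $u$ always leave the walk at $v_6$ or $v_2$, from which the table outputs only $2$, $u$, or $d$; conversely, letters $1$ and $d$ always leave the walk at $v_4$, where all four letters are allowed. Hence the appended final $1$ is well-defined because the walk is guaranteed to end at $v_4$, and no $21$ or $u1$ arises in the body. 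Once this correspondence is checked the two constructions are mutually inverse by inspection, completing the bijection and, via Lemma~\ref{lem:rect-regex}, the theorem.
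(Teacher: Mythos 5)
Your proposal is correct and is essentially the paper's own proof: the paper likewise reads the walk two steps at a time, treats $\{v_2,v_4,v_6\}$ as states of a DFA, relabels the transitions state-dependently into $\{1,2,u,d\}$ so that the letter $1$ occurs only at $v_4$, and appends a terminal $1$ to land in $L_{\rect}$. The only difference is the immaterial swap of the labels $2$ and $u$ on the edges $v_4\to v_2$ and $v_4\to v_6$ (an isomorphic automaton by the left--right symmetry of $P_7$), plus a minor miscitation: the encoding of rectangular permutations by $L_{\rect}$ is Lemma~\ref{lem:rect-uniquely-expressible} and its corollary rather than Lemma~\ref{lem:rect-regex}.
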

Elizalde \cite{Eli} also produced a bijection between walks of length $2n-2$ starting and ending at the middle vertex of $P_7$ and rectangular permutations in $S_n$ by bijecting rectangular permutations with words of length $n-1$ in a regular language on the alphabet $\{E,W,R,L\}$ ending in $W$ or $E$ with no $LE$ or $RW$. Our proof is different.

\begin{proof}
We produce a bijection between these paths and words of length $n-1$ in a language that is $L_{\rect}$ with the trailing $1$ of each word truncated, hence to $L_{\rect}$ and to rectangular permutations. We create a regular language encoding paths using a discrete finite automaton (DFA).

Instead of using all 7 vertices of $P_7$ as states, we consider taking steps two by two. By parity, after starting at the middle vertex $v_4$, a walk must reach a vertex in $\{v_2,v_4,v_6\}$ after an even number of steps. Elements of this set will correspond to the states of a DFA with alphabet $\{\LL, \LR, \RL, \RR\}$.

$$
\begin{tikzpicture}
\node[state] (v2) {$v_2$};
\node[state, initial, initial where=above, accepting, right of=v2] (v4) {$v_4$};
\node[state, right of=v4] (v6) {$v_6$};
\draw (v2) edge[loop below] node{LR,RL} (v2)
(v2) edge[bend left, above] node{RR} (v4)
(v4) edge[bend left, below] node{LL} (v2)
(v4) edge[loop below] node{LR,RL} (v4)
(v6) edge[loop below] node{LR,RL} (v6)
(v4) edge[bend left, above] node{RR} (v6)
(v6) edge[bend left, below] node{LL} (v4);
\end{tikzpicture}
$$

The extra arrow into $v_4$ indicates that it is the initial state and the double circle indicates that it is the only accepting state. The edges are labeled by the pair or pairs of steps that move from one state to the next. The character L indicates a leftward step and the character R indicates a rightward step. We can convert this DFA to a regular expression generating the language, producing $$(\textrm{LR}|\textrm{RL}|\textrm{LL}(\textrm{LR}|\textrm{RL})^*\textrm{RR}|\textrm{RR}(\textrm{LR}|\textrm{RL})^*\textrm{LL})^*.$$ There is no way of substituting the pairs LL, LR, RL, and RR to $\{1,2,d,u\}$ to produce a language similar to $L_{\rect}.$ To see why, we observe that the number of LL symbols must be equal to the number of RR symbols, and no analogous restriction occurs in $L_{\rect}.$ Instead, we relabel the state transitions as follows:

$$\begin{tikzpicture}
\node[state] (v2) {$v_2$};
\node[state, initial, initial where=above, accepting, right of=v2] (v4) {$v_4$};
\node[state, right of=v4] (v6) {$v_6$};
\draw (v2) edge[loop below] node{2,u} (v2)
(v2) edge[bend left, above] node{d} (v4)
(v4) edge[bend left, below] node{u} (v2)
(v4) edge[loop below] node{1,d} (v4)
(v6) edge[loop below] node{2,u} (v6)
(v4) edge[bend left, above] node{2} (v6)
(v6) edge[bend left, below] node{d} (v4);
\end{tikzpicture}$$

If we convert this to a regular expression, we get 
$$(1|d|u(2|u)^*d|2(2|u)^*d)^* = (1 | (2|u)^*d)^*.$$
The language of this expression is all strings in $\{1,2,d,u\}$ with no $21$ or $u1$ substring that do not end in $2$ or $u$. This is almost the same as $L_{\rect}$. We can make a bijection by adding a terminal $1$. This proves the theorem.

\end{proof}

\section{Future directions}\label{sec:future-dir}
We discuss possible future directions, both algebraic and enumerative.

In the algebraic direction, the bijection between rectangular and evil-avoiding permutations may help us derive the inhomogeneous TASEP steady-state probabilities corresponding to rectangular permutations. Kim and Williams express these steady-state probabilities corresponding to evil-avoiding permutations as a ``trivial factor” times a product of (double) Schubert polynomials (for a more precise statement, see Theorem 1.11 in \cite{KW}), so there is hope that the rectangular permutations may give rise to nice steady-state expressions. 

Going in an enumerative direction, one could try to biject other pairs of objects counted by A006012, as suggested by Williams in personal communication \cite{pc}. Two sets of objects seem likely to shed light on evil-avoiding permutations, either because they are permutation classes or are clearly recursively constructed. The last was included in \cite{KW} as particularly interesting.
\begin{enumerate}
    \item paths of length $2n$ with $n=0$ starting at the initial node on the path graph with 7 vertices,
    \item permutations on $[n]$ with no subsequence $abcd$ such that (i) $bc$ are adjacent in position and (ii) $\max(a, c) < \min(b, d)$.
\end{enumerate}

From another enumerative perspective, we may want to consider which sets of permutations can be bijected using similar techniques as those used here. Permutations avoiding $\{4312,4231\}$, $\{4312,4213\}$, $\{4231,4213\}$, $\{4213,4132\}$, and $\{4213,3214\}$ are known to be equinumerous. When they are triply graded by size, recoils, and descents, these five sets of permutations still appear equinumerous. It would be interesting to construct a bijection between any pair of these five sets of permutations, and particularly so if the bijection preserves the number of recoils and/or descents. Evil-avoiding and rectangular permutations also seem to be equinumerous when triply graded by size, recoils, and descents.

Here, we used regular languages to biject two particular families of permutations. This technique may be more widely applicable to other permutation families, but there is a limitation that the generating functions of regular languages must be rational (the sequences satisfy linear recurrence relations). Many families of pattern-avoiding permutations are known not to have rational generating functions, e.g., permutations avoiding any one pattern of size three are counted by Catalan numbers, and the generating function for Catalan numbers is algebraic but not rational. 
However, some pattern-avoiding permutation families have rational generating functions. For these permutation families, it would be interesting to check if there are natural bijections with regular languages. If so, then the techniques of this paper might be used to establish many other nontrivial Wilf-equivalences.

\section{Acknowledgments}\label{sec:acknowledgments}
The author thanks Joseph Gallian and the University of Minnesota Duluth REU during which this research was conducted, Amanda Burcroff, Andrew Kwon, and Mitchell Lee for valuable feedback on the paper, and all participants and advisors for helpful discussions. This work was done with support from
Jane Street Capital, the NSA (grant number H98230-22-1-0015), the NSF (grant number DMS-2052036), and Harvard University.

\printbibliography

\end{document}